\newtheorem{thm}{Theorem}[section]
\newtheorem*{thmn}{Theorem}
\newtheorem{lemma}[thm]{Lemma}
\newtheorem{prop}[thm]{Proposition}
\newtheorem{cor}[thm]{Corollary}
\newtheorem{claimn}{Claim}
\theoremstyle{definition}
\newtheorem{exa}[thm]{Example}
\theoremstyle{remark}
\newtheorem{rem}[thm]{Remark}
\numberwithin{equation}{section}
\newcommand{\vanish}[1]{\relax}       
\def\qedsymbol{\hbox to 1ex{\llap{\rule{0.25pt}{1ex}}\rlap{\rule{1ex}{0.25pt}}\lower0.25pt\rlap{\raise1ex\rlap{\rule{1ex}{0.25pt}}}\hskip1ex\llap{\rule{0.25pt}{1ex}}}}
\def\rlqed{\rlap{\rule{\hsize}{0pt}\kern-1ex\kern-1em\qed}} 
\newcounter{aufzi}
\newenvironment{aufzi}{\begin{list}{ {\upshape(\alph{aufzi})}}{
        \usecounter{aufzi}
        \topsep1ex
        \parsep0cm
        \itemsep0.8ex
        \leftmargin1cm
        \labelwidth0.5cm
        \labelsep0.3cm
}}
{\end{list}}
\newcounter{aufzii}
\newcounter{aufziii}
\newenvironment{aufziii}{\begin{list}{ {\upshape\arabic{aufziii})}}{
        \usecounter{aufziii}
        \topsep1ex
        \parsep0cm
        \itemsep0.8ex
        \leftmargin1cm
        \labelwidth0.5cm
        \labelsep0.3cm
}}
{\end{list}}
\newcommand{\bbE}{\mathbb{E}}
\newcommand{\calB}{\mathcal{B}}
\newcommand{\calC}{\mathcal{C}}
\newcommand{\calF}{\mathcal{F}}
\def\ue{\mathrm{e}}
\renewcommand{\ue}{\mathrm{e}}    
\newcommand{\R}{\mathbb{R}}     
\newcommand{\N}{\mathbb{N}}
\newcommand{\Z}{\mathbb{Z}}
\newcommand{\sdif}{\triangle}      
\newcommand{\Bigcap}[2][\relax]{%
 \ifx#1\relax \bigcap_{#2}
 \else \bigcap^{#1}_{#2}
 \fi}
\newcommand{\Bigcup}[2][\relax]{%
 \ifx#1\relax \bigcup_{#2}
 \else \bigcup^{#1}_{#2}
 \fi}
\def\fact#1#2{#1/#2}
\def\tfact#1#2{#1/#2}
\def\fact#1#2{{\raise0.2em\hbox{$#1$}\kern-0.2em/\kern-0.1em\lower0.2em\hbox{$#2$}}}
\def\tfact#1#2{{\raise0.1em\hbox{\small$#1$}\kern-0.1em/\kern-0.1em\lower0.1em\hbox{\small$#2$}}}
\newcommand{\norm}[2][\relax]{
   \ifx#1\relax \ensuremath{\left\Vert#2\right\Vert}
   \else \ensuremath{\left\Vert#2\right\Vert_{#1}}
   \fi}
\newcommand{\Bnorm}[2][\relax]{
   \ifx#1\relax \ensuremath{\Bigl\Vert#2\Bigr\Vert}
   \else \ensuremath{\Bigl\Vert#2\Bigr\Vert_{#1}}
   \fi}
\newcommand{\tdprod}[2]{\ensuremath{%
  \setbox0=\hbox{\ensuremath{\langle#1,#2 \rangle}}
  \dimen@\ht0
  \advance\dimen@ by \dp0 (#1\rule[-\dp0]{0pt}{\dimen@}\,|#2\hspace{1pt})}}
\newcommand{\dprod}[2]{\ensuremath{%
  \setbox0=\hbox{\ensuremath{\left\langle#1,#2\right\rangle}}
  \dimen@\ht0
  \advance\dimen@ by \dp0 \left\langle\left.#1\rule[-\dp0]{0pt}{\dimen@}\,\right|#2\hspace{1pt}\right\rangle}}
\newcommand{\bdprod}[2]{\ensuremath{%
  \setbox0=\hbox{\ensuremath{\bigl\langle#1,#2\bigr\rangle}}
  \dimen@\ht0
  \advance\dimen@ by \dp0 \bigl\langle#1\bigl|\rule[-\dp0]{0pt}{\dimen@}\bigr.#2\hspace{1pt}\bigr\rangle}}
\newcommand{\Bdprod}[2]{\ensuremath{%
  \setbox0=\hbox{\ensuremath{\Bigl\langle#1,#2\Bigr\rangle}}
  \dimen@\ht0
  \advance\dimen@ by \dp0 \Bigl\langle#1\Bigl|\rule[-\dp0]{0pt}{\dimen@}\Bigr.#2\hspace{1pt}\Bigr\rangle}}
\newcommand{\tsprod}[2]{\ensuremath{%
  \setbox0=\hbox{\ensuremath{(#1,#2)}}
  \dimen@\ht0
  \advance\dimen@ by \dp0 (#1\rule[-\dp0]{0pt}{\dimen@}\,|#2\hspace{1pt})}}
\newcommand{\sprod}[2]{\ensuremath{%
  \setbox0=\hbox{\ensuremath{\left(#1,#2\right)}}
  \dimen@\ht0
  \advance\dimen@ by \dp0 \left(\left.#1\rule[-\dp0]{0pt}{\dimen@}\,\right|#2\hspace{1pt}\right)}}
\newcommand{\bsprod}[2]{\ensuremath{%
  \setbox0=\hbox{\ensuremath{\bigl(#1,#2\bigr)}}
  \dimen@\ht0
  \advance\dimen@ by \dp0 \bigl(#1\bigl|\rule[-\dp0]{0pt}{\dimen@}\bigr.#2\hspace{1pt}\bigr)}}
\newcommand{\Bsprod}[2]{\ensuremath{%
  \setbox0=\hbox{\ensuremath{\Bigl(#1,#2\Bigr)}}
  \dimen@\ht0
  \advance\dimen@ by \dp0 \Bigl(#1\Bigl|\rule[-\dp0]{0pt}{\dimen@}\Bigr.#2\hspace{1pt}\Bigr)}}
\newcommand{\Ell}[2][\relax]{
   \ifx#1\relax \mathrm{L}^{\mathrm{#2}}
   \else \mathrm{L}^{\mathrm{#2}}_{\mathrm{#1}}
   \fi}
\renewcommand{\Ell}[2][\relax]{
   \ifx#1\relax \mathrm{L}^{\!#2}
   \else \mathrm{L}^{\!#2}_{\mathrm{#1}}
   \fi}
\newcommand{\Wee}[2][\relax]{
   \ifx#1\relax \mathrm{W}^{\mathrm{#2}}
   \else \mathrm{W}^{\mathrm{#2}}_{\mathrm{#1}}
   \fi}
\newcommand{\Har}[2][\relax]{
   \ifx#1\relax \mathsf{H}^{\mathsf{#2}}
   \else   \mathsf{H}^{\mathsf{#2}}_{\mathrm{#1}}
   \fi}
\def\prX{\mathrm X}    
\def\rlqed{\rlap{\rule{\hsize}{0pt}\kern-1ex\kern-1em\qed}}
\def\maketag@@@@@#1{\llap{\hbox to\hsize{\m@th\normalfont#1}}%
\gdef\tagform@##1{\maketag@@@{(\ignorespaces##1\unskip\@@italiccorr)}}}
\def\eqtext#1{\gdef\tagform@##1{\maketag@@@@@{\ignorespaces##1\unskip\@@italiccorr\hfill}}\tag{#1}}%
\def\reqtext#1{\gdef\tagform@##1{\maketag@@@@@{\hfill\ignorespaces##1\unskip\@@italiccorr}}\tag{#1}}%
\def\leqtext#1{\gdef\tagform@##1{\maketag@@@@@{\ignorespaces##1\unskip\@@italiccorr}}\tag{#1}}%
\newcommand{\cntm}[1]{\left|#1\right|}
\newcommand{\Rp}{\R_{\geq 0}}
\newcommand{\Rps}{\R_{>0}}
\newcommand{\Zp}{\Z_{\geq 0}}
\newcommand{\bl}{\mathrm B}
\newcommand{\bdr}[1]{\partial_{#1}}
\newcommand{\intr}[1]{\mathrm{int}_{#1}}
\newcommand{\avg}[1]{\bbE_{#1}}
\newcommand{\indi}[1]{\mathbf{1}_{#1}}
\date{\today}
\begin{document}

\title[Fluctuations of ergodic averages]{Fluctuations of ergodic averages\\ for
  Actions of Groups of Polynomial Growth.}

\author[Nikita Moriakov]{Nikita Moriakov}
\address{Delft Institute of Applied Mathematics, Delft University of Technology,
P.O. Box 5031, 2600 GA Delft, The Netherlands}

\email{n.moriakov@tudelft.nl}

\subjclass{Primary  28D05, 28D15}
\renewcommand{\subjclassname}{\textup{2000} Mathematics Subject
    Classification}

\date{\today}

\begin{abstract}
It was shown by S. Kalikow and B. Weiss that, given a measure-preserving action
of $\Z^d$ on a probability space $\mathrm{X}$ and a nonnegative
measurable function $f$ on $\mathrm{X}$, the probability that the sequence of
ergodic averages
$$
\frac 1 {(2k+1)^d} \sum\limits_{g \in [-k,\dots,k]^d} f(g \cdot x)
$$
has at least $n$ fluctuations across an interval $(\alpha,\beta)$
can be bounded from above by $c_1 c_2^n$ for some universal constants
$c_1 \in \mathbb{R}$ and $c_2 \in (0,1)$, which depend only on $d,\alpha,\beta$. The purpose of this article is
to generalize this result to measure-preserving actions of groups of
polynomial growth. As the main tool we develop a generalization of effective
Vitali covering theorem for groups of polynomial growth.
\end{abstract}

\maketitle

\section{Introduction}
Given an integer $n \in \Zp$ and some numbers $\alpha,\beta \in \R$ such that $\alpha<\beta$, a sequence of real numbers $(a_i)_{i = 1}^k$ is said to \textbf{fluctuate at
least $n$ times} across the interval $(\alpha,\beta)$ if there are indexes $1 \leq i_0 < i_1 < \dots < i_n
\leq k$ such
that
\begin{aufziii}
\item if $j$ is odd, then $a_{i_j} < \alpha$;
\item if $j$ is even, then $a_{i_j} > \beta$.
\end{aufziii} 
In this case it is clear that for every even $j$ we have
\[
a_{i_j} > \beta \quad \text{and} \quad a_{i_{j+1}} < \alpha,
\]
i.e., $(a_i)_{i=1}^k$ has at least $\lceil \frac n 2 \rceil$
\textbf{downcrossings} from $\beta$ to $\alpha$ and at least $\lfloor
\frac n 2 \rfloor$ \textbf{upcrossings} from $\alpha$ to $\beta$. If
$(a_i)_{i \geq 1}$ is an infinite sequence of real numbers, we use the
same terminology and say
that $(a_i)_{i \geq 1}$ fluctuates at least $n$ times across the
interval $(\alpha,\beta)$ if some initial segment $(a_i)_{i=1}^k$ of
the sequence fluctuates at least $n$ times across $(\alpha,\beta)$. We
denote the sets of all real-valued sequences having at least $n$
fluctuations across an interval $(\alpha, \beta)$ by
$\calF_{(\alpha,\beta)}^n$, and it will be clear from the context if
we are talking about finite or infinite sequences.

The main result of this article is the following theorem, which
generalizes the results in \cite{kw1999} about fluctuations of
averages of nonnegative functions.
\begin{thmn}
Let $\Gamma$ be a group of polynomial growth and
let $(\alpha, \beta) \subset \Rps$ be some nonempty interval. Then
there are some constants $c_1,c_2 \in \Rps$ with $c_2<1$, which depend
only on $\Gamma$, $\alpha$
and $\beta$, such that the following assertion holds. 

For any probability space $\prX=(X, \calB, \mu)$, any
measure-preserving action of $\Gamma$ on $\prX$ and any measurable $f
\geq 0$ on $X$ we have
\[
\mu(\{ x: (\avg{g \in \bl(k)} f(g \cdot x))_{k \geq 1} \in \calF_{(\alpha,\beta)}^N  \})
< c_1 c_2^N
\]
for all $N \geq 1$.
\end{thmn}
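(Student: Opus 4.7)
The plan is to follow the strategy of Kalikow--Weiss in \cite{kw1999}, with their Vitali covering lemma on $\Z^d$ replaced by the effective Vitali covering theorem for groups of polynomial growth that has been developed in the preceding sections. The argument splits into a transference step that turns the statement into a purely combinatorial counting estimate on $\Gamma$, and an inductive Vitali reduction that produces the exponential bound in $N$.

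First, by a Calder\'on-type transference, it suffices to prove an estimate of the form
\[
\#\bigl\{ x \in F \st (\avg{g \in \bl(k)} h(gx))_k \in \calF_{(\alpha,\beta)}^N \bigr\} < c_1 c_2^N \, |F|
\]
uniformly over sufficiently invariant F\o lner sets $F \subset \Gamma$ and nonnegative $h : F \pfeil \Rp$, where the averages are restricted to scales with $\bl(k) x \subset F$. The passage from this combinatorial estimate back to the original statement on an arbitrary probability space is standard, relying only on the fact that the orbit of $\mu$-almost every point looks locally like $\Gamma$ itself.

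Next, set $E_N$ to be the bad set and induct on $N$. For each $x \in E_N$, pick witnessing scales $k_1(x) < \dots < k_N(x)$ and let $k(x) := k_N(x)$; depending on the parity of $N$, the average over $\bl(k(x))$ at $x$ is either $>\beta$ (``heavy'') or $<\alpha$ (``light''). Apply the effective Vitali covering theorem to the collection $\{\bl(k(x)) x : x \in E_N\}$ to extract a disjoint subfamily $\{\bl(k(x_j)) x_j\}_j$ whose union covers a fixed fraction $\kappa = \kappa(\Gamma) > 0$ of $E_N$. On each selected ball, the points $g x_j$ for $g$ in a large inner subball should inherit at least $N - 1$ fluctuations from $x_j$, since for $\ell < k_{N-1}(x_j)$ the ball $\bl(\ell) g x_j$ agrees with $\bl(\ell) x_j$ up to a boundary correction controlled by polynomial growth. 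The alternation between the heavy and the light cases then forces either a definite chunk of that inner core to lie outside $E_N$, or a strict drop in the integral of $h$ on the selected balls.

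Iterating this one-step gain yields a recursion $\mu(E_N) \leq (1-\eta)\, \mu(E_{N-1})$ for some $\eta = \eta(\Gamma,\alpha,\beta) > 0$, which together with the trivial bound $\mu(E_1) \leq 1$ gives the conclusion with $c_2 = 1 - \eta$. The main difficulty I anticipate is the inheritance step: making it quantitative, uniformly in the scale $k$ and with constants depending only on $\Gamma$, that lower-order fluctuations of $x_j$ are retained on a large inner subball of $\bl(k(x_j))$. In $\Z^d$, Kalikow and Weiss relied on the exact translation invariance of cubes; for groups of polynomial growth one has only approximate invariance of balls in the orbit, and the effective Vitali machinery developed earlier in the paper is precisely what should allow us to control the relative size of this ``good'' inner core independently of the scale.
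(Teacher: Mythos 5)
Your transference step matches the paper's (Lemma \ref{l.caldtrans}), and you are right that the effective Vitali covering theorem is the engine. But the inductive core of your argument has a genuine gap, and it is not the route the paper takes. The ``inheritance'' claim --- that for $g$ in a large inner subball of $\bl(k_N(x_j))$ and $\ell < k_{N-1}(x_j)$ the average of $h$ over $\bl(\ell)\,g x_j$ agrees with that over $\bl(\ell)\,x_j$ up to a boundary correction --- is false in the relevant regime. Approximate invariance from polynomial growth controls $\cntm{\bl(\ell) g \,\sdif\, \bl(\ell)}$ only when $\norm{g}$ is small compared to $\ell$; for a typical point of the inner core one has $\norm{g}$ comparable to $k_N(x_j)$, which may vastly exceed the lower witnessing scales $\ell$, so $\bl(\ell)gx_j$ and $\bl(\ell)x_j$ are essentially disjoint pieces of the orbit and their averages are unrelated. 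Consequently the selected balls do not produce a definite subset of $E_{N-1}\setminus E_N$, and the recursion $\mu(E_N)\leq(1-\eta)\mu(E_{N-1})$ is not established. A second, independent problem: a single-level Vitali selection recovers only a $1/c$ fraction per step, and nothing in your scheme compensates for losing the complementary $1-1/c$ at every one of the $N$ iterations; the whole point of the multi-level statement of Theorem \ref{t.evc} (the parameter $n$, with efficiency $1-(1-1/c)^n$) is to drive the per-step covering loss below a threshold $\delta$ that the argument can afford.

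The mechanism that actually yields the exponential bound is different and does not move fluctuations from $x_j$ to nearby points. After transference, every $z$ in the bad set $B_0\subseteq\bl(L)$ carries its own nested family of balls $\bl(k)z$ (balls centered at $z$ by right-invariance of $d_R$) with alternating averages $>\beta$ and $<\alpha$. One alternates two operations: an effective Vitali selection producing a disjoint family of ``heavy'' balls covering all but a $\delta$-fraction of the current set, followed by passage to larger ``light'' balls centered at the same points. Lemma \ref{l.ballgrowth} says that disjoint balls with average $>\beta$ contained in a ball with average $<\alpha$ fill at most an $\alpha/\beta$ fraction of it, so each heavy-to-light passage multiplies the total measure of the disjoint family by at least $\tfrac{\beta}{\alpha}(1-\delta)$. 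Since everything stays inside $\bl(L)$, after $\sim N/T$ rounds one gets $\cntm{B_0}\leq\bigl(\tfrac{\alpha}{\beta}(1-\delta)^{-2}\bigr)^{\lfloor (N-N_0)/2T\rfloor}\cntm{\bl(L)}$, which is the desired bound with $c_2=\bigl(\tfrac{\alpha}{\beta}(1-\delta)^{-2}\bigr)^{1/2T}$. Corollary \ref{c.skip} is what guarantees the nesting ratio $r$ between consecutive selected scales needed for hypothesis (b) of Theorem \ref{t.evc}, and Lemmas \ref{l.smallbdr} and \ref{l.smallenl} control the boundary and enlargement errors. If you want to salvage an induction on $N$, you would still need to replace the inheritance step by this measure-expansion argument; as written, the proposal does not close.
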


The paper is structured as follows. We provide some background on
groups of polynomial growth in Section \ref{ss.grouppolgr}, discuss
some special properties of averages on groups of polynomial growth and
a transference principle in Section \ref{ss.avgongrp} and prove
effective Vitali covering theorem in Section \ref{ss.vitcov}. The
main theorem of this paper is Theorem \ref{t.expdec}, which is proved
in Section \ref{s.upcrineq}.

This research was done during the author's PhD studies under the
supervision of Markus Haase. I would like to thank him for his support
and advice.

\section{Preliminaries}
\subsection{Groups of Polynomial Growth}
\label{ss.grouppolgr}
Let $\Gamma$ be a finitely generated group and $\{
\gamma_1,\dots,\gamma_k\}$ be a fixed generating set. Each element
$\gamma \in \Gamma$ can be represented as a product
$\gamma_{i_1}^{p_1} \gamma_{i_2}^{p_2} \dots \gamma_{i_l}^{p_l}$ for
some indexes $i_1,i_2,\dots,i_l \in 1,\dots,k$ and some integers $p_1,p_2,\dots,p_l \in \Z$. We define the \textbf{norm} of an element $\gamma
\in \Gamma$ by
\[
\| \gamma \|:=\inf\{ \sum\limits_{i=1}^l |p_i|: \gamma =
\gamma_{i_1}^{p_1} \gamma_{i_2}^{p_2} \dots \gamma_{i_l}^{p_l} \},
\]
where the infinum is taken over all representations of $\gamma$ as a
product of the generating elements. The norm $\| \cdot \|$ on $\Gamma$, in general, does depend on the generating
set. However, it is easy to show \cite[Corollary 6.4.2]{ceccherini2010} that
two different generating sets produce equivalent norms. We will always
say what generating set is used in the definition of a norm, but we will
omit an explicit reference to the generating set later on. For every
$n \in \Rp$ let
\[
\bl(n):= \{ \gamma \in \Gamma: \| \gamma \| \leq n\} 
\]
be the closed ball of radius $n$.

The norm $\| \cdot \|$ yields a right invariant metric on $\Gamma$
defined by
\[
d_R(x,y):=\| x y^{-1}\| \quad (x,y \in \Gamma),
\]
and a left invariant metric on $\Gamma$ defined by
\[
d_L(x,y):=\| x^{-1} y\| \quad (x,y \in \Gamma),
\]
which we call the \textbf{word metrics}. The right invariance of $d_R$ means that
the right multiplication
\[
R_g: \Gamma \to \Gamma, \quad x \mapsto x g \quad ( x \in \Gamma)
\]
is an isometry for every $g \in \Gamma$ with respect to
$d_R$. Similarly, the left invariance of $d_L$ means that the left
multiplications are isometries with respect to $d_L$.
We let $d:=d_R$ and view
$\Gamma$ as a metric space with the metric $d$. For $x\in \Gamma$, $r
\in \Rp$ let
\[
\bl(x,r):=\{ y \in \Gamma: d(x,y) \leq r\}
\]
be the closed ball of radius $r$ with center $x$. Using the right
invariance of the metric $d$, it is easy to see that
\[
\cntm{\bl(x,r)} = \cntm{\bl(y,r)} \quad \text{ for all } x,y \in \Gamma.
\]
Let $\ue \in \Gamma$ be the neutral element. It is clear that 
\[
\bl(n) = \{ \gamma: d_R(\ue,\gamma) \leq n\} = \{ \gamma:
d_L(\ue,\gamma) \leq n\},
\]
i.e., the ball $\bl(n)$ is precisely the ball $\bl(\ue, n)$ with respect to the
left and the right word metric.

It is important to understand how fast the balls $\bl(n)$ in the group
$\Gamma$ grow as $n
\to \infty$. The \textbf{growth function} $\gamma: \N \to \N$ is defined by
\[
\gamma(n):=\cntm{\bl(n)} \quad (n \in \N).
\]
We say that the group $\Gamma$ is of
\textbf{polynomial growth} if there are constants $C,d>0$ such that
for all $n \geq 1$ we have
\[
\gamma(n) \leq C(n^d+1).
\]

\begin{exa}
\label{ex.zdex}
Consider the group $\Z^d$ for $d \in \N$ and let $\gamma_1,\dots,\gamma_d \in \Z^d$ be the
standard basis elements of $\Z^d$. That is, $\gamma_i$ is defined by
\[
\gamma_i(j):=\delta_i^j \quad (j=1,\dots, d)
\] 
for all $i=1,\dots,d$. We consider the generating set given by elements $\sum\limits_{k \in I} (-1)^{\varepsilon_k}\gamma_k$ for all
subsets $I \subseteq [1,d]$ and all functions $\varepsilon_{\cdot} \in
\{ 0,1\}^I$. Then it is easy
to see by induction on dimension that $\bl(n) = [-n,\dots,n]^d$, hence
\[
\cntm{\bl(n)} = (2n+1)^d \quad \text{ for all } n \in \N
\]
with respect to this generating set, i.e., $\Z^d$ is a group of polynomial growth.
\end{exa}

Let $d \in \Zp$. We say that the group $\Gamma$ has \textbf{polynomial growth
of degree $d$} if there is a constant $C>0$ such that
\[
\frac 1 C n^d \leq \gamma(n) \leq C n^d \quad \text{ for all } n \in \N.
\]
It was shown in \cite{bass1972} that, if $\Gamma$ is a finitely
generated nilpotent group, then $\Gamma$ has polynomial growth of some
degree $d \in \Zp$. Furthermore, one can show \cite[Proposition
6.6.6]{ceccherini2010} that if $\Gamma$ is a group and $\Gamma' \leq \Gamma$
is a finite index, finitely generated nilpotent subgroup, having
polynomial growth of degree $d \in \Zp$, then the group $\Gamma$ has
polynomial growth of degree $d$ as well. A surprising fact is that the
converse is true as well. Namely, it was proved in \cite{gromov1981}
that, if $\Gamma$ is a group of polynomial growth, then there is a
finite index, finitely generated nilpotent subgroup $\Gamma' \leq
\Gamma$. It follows that if $\Gamma$ is a group of polynomial growth
with the growth function $\gamma$, then there is a constant $C>0$ and
an integer $d\in \Zp$, called the \textbf{degree of polynomial growth}, such that
\[
\frac 1 C n^d \leq \gamma(n) \leq C n^d \quad \text{ for all } n \in \N.
\]
An even stronger result was obtained in \cite{pansu1983}, where it is
shown that, if $\Gamma$ is a group of polynomial growth of degree $d
\in \Zp$, then the limit
\begin{equation}
\label{eq.pansu}
c_{\Gamma}:=\lim\limits_{n \to \infty} \frac{\gamma(n)}{n^d}
\end{equation}
exists. As a consequence, one can show that groups of polynomial growth are amenable. 
\begin{prop}
Let $\Gamma$ be a group of polynomial growth. Then $(\bl(n))_{n \geq
1}$ is a F{\o}lner sequence in $\Gamma$.
\end{prop}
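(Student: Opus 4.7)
The plan is to show that for every fixed $g \in \Gamma$,
\[
\frac{\cntm{g\bl(n) \sdif \bl(n)}}{\cntm{\bl(n)}} \to 0 \quad \text{as } n \to \infty.
\]
The key observation is that this symmetric difference is contained in a thin annulus around the sphere of radius $n$, whose relative size vanishes by Pansu's asymptotic \eqref{eq.pansu}.

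First, observe that the word norm satisfies the triangle inequality $\|xy\| \leq \|x\| + \|y\|$ and the symmetry $\|x^{-1}\| = \|x\|$, both immediate from the definition (the latter by inverting a representation of $x$). Set $r := \|g\|$ and take $n \geq r$. If $\gamma \in g\bl(n)$, then $\gamma = g\gamma'$ with $\|\gamma'\| \leq n$, so $\|\gamma\| \leq n+r$; conversely, if $\gamma \in \bl(n-r)$, then $\|g^{-1}\gamma\| \leq r + (n-r) = n$, hence $\gamma \in g\bl(n)$. We thus obtain
\[
\bl(n-r) \subseteq g\bl(n) \subseteq \bl(n+r),
\]
and together with the trivial inclusions $\bl(n-r) \subseteq \bl(n) \subseteq \bl(n+r)$ this gives
\[
g\bl(n) \sdif \bl(n) \subseteq \bl(n+r) \setminus \bl(n-r).
\]

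Second, by Pansu's theorem \eqref{eq.pansu}, writing $\gamma(n) = c_\Gamma n^d + o(n^d)$ with $c_\Gamma > 0$ and $d$ the degree of polynomial growth, we get
\[
\cntm{\bl(n+r)} - \cntm{\bl(n-r)} = c_\Gamma\bigl[(n+r)^d - (n-r)^d\bigr] + o(n^d) = O(n^{d-1}) + o(n^d) = o(n^d),
\]
since $r$ is fixed. Dividing by $\cntm{\bl(n)} \sim c_\Gamma n^d$ yields the desired F{\o}lner property.

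The argument presents no essential obstacle: it reduces the F{\o}lner property to the triangle inequality for $\|\cdot\|$ combined with Pansu's convergence \eqref{eq.pansu}. The only point of care is keeping track of which norm inequality is paired with which invariance of the word metric; but since $\bl(n)$ is symmetric ($\bl(n)^{-1} = \bl(n)$), the same annulus bound handles left and right translations simultaneously, so $(\bl(n))_{n \geq 1}$ is F{\o}lner with respect to either convention.
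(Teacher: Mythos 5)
Your proof is correct and follows essentially the same route as the paper: trap $g\bl(n)\sdif\bl(n)$ in an annulus of inner and outer radii $n\mp\|g\|$ and use Pansu's limit \eqref{eq.pansu} to see that the annulus is $o(\cntm{\bl(n)})$. Your two-sided containment $\bl(n-r)\subseteq g\bl(n)\subseteq\bl(n+r)$ is in fact slightly cleaner than the paper's one-sided estimate, which implicitly also uses $\cntm{g\bl(n)}=\cntm{\bl(n)}$ to control $\bl(n)\setminus g\bl(n)$.
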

\begin{proof}
We want to show that for every $g \in \Gamma$
\[
\lim\limits_{n \to \infty} \frac{\cntm{g \bl(n) \sdif
    \bl(n)}}{\cntm{\bl(n)}} = 0.
\]
Let $m:=d(g,e) \in \Zp$. Then $g \bl(n) \subseteq \bl(n+m)$, hence
\[
\frac{\cntm{g \bl(n) \sdif
    \bl(n)}}{\cntm{\bl(n)}} \leq \frac{\cntm{\bl(n+m)} - \cntm{ \bl(n)}} {\cntm{\bl(n)}} \to 0,
\]
where we use the existence of the limit in Equation \eqref{eq.pansu}.
\end{proof}

It will be useful later to have a special notion for the points which
are `close enough' to the boundary of a ball in $\Gamma$. Let $W:=\bl(y,s)$ be some ball in
$\Gamma$. For a given $r \in \Rps$ the \textbf{$r$-interior}
of $W$ is defined as
\[
\intr{r}(W):=\bl(y,(1-5/r)s).
\] 
The \textbf{$r$-boundary} of $W$ is defined as
\[
\bdr{r}(W):=W \setminus \intr{r}(W).
\]
If a set $\calC$ is a disjoint collection of balls in $\Gamma$, we define the
$r$-interior and the $r$-boundary of $\calC$ as
\[
\intr{r}(\calC):=\bigsqcup\limits_{W \in \calC} \intr{r}(W)
\]
and
\[
\bdr{r}(\calC):=\bigsqcup\limits_{W \in \calC} \bdr{r}(W)
\]
respectively. It will be essential to know that the $r$-boundary
becomes small (respectively, the $r$-interior becomes large) for large
enough balls and large enough $r$. More precisely, we state the
following lemma, whose proof follows from the result of Pansu (see Equation
\eqref{eq.pansu}).

\begin{lemma}
\label{l.smallbdr}
Let $\Gamma$ be a group of polynomial growth and $\delta \in (0,1)$ be
some constant. Then
there exist constants $n_0, r_0 \in \N$, depending only on $\Gamma$
and $\delta$, such that the following holds. If $\calC$ is a finite collection of disjoint balls with radii greater than $n_0$,
then for all $r > r_0$
\[
\cntm{\intr{r} ( \calC )} > (1-\delta) \cntm{\bigsqcup\limits_{W \in \calC} W}
\]
and
\[
\cntm{\bdr{r} ( \calC )} < \delta \cntm{\bigsqcup\limits_{W \in \calC} W}.
\]
\end{lemma}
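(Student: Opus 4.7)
The plan is to reduce the statement to a bound on a single ball, then invoke Pansu's theorem in the form of Equation~\eqref{eq.pansu} to compare the cardinalities of balls of radii $(1-5/r)s$ and $s$.

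First I would note that since $\calC$ is a disjoint family of balls and $\intr{r}(W)\subseteq W$ for each $W \in \calC$, the unions $\bigsqcup_{W \in \calC}\intr{r}(W)$ and $\bigsqcup_{W \in \calC}\bdr{r}(W)$ are genuinely disjoint, so both quantities of interest are additive over $\calC$. Consequently it suffices to find $n_0,r_0 \in \N$ such that for every individual ball $W = \bl(y,s)$ with $s > n_0$ and every $r > r_0$ one has $\cntm{\intr{r}(W)} > (1-\delta)\cntm{W}$; the companion inequality for $\bdr{r}$ then follows immediately from $\bdr{r}(W) = W \setminus \intr{r}(W)$.

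Next, right-invariance of the word metric gives $\cntm{\bl(y,s)} = \gamma(\lfloor s\rfloor)$, so the target ratio becomes
\[
\frac{\cntm{\intr{r}(W)}}{\cntm{W}} \;=\; \frac{\gamma\bigl(\lfloor (1-5/r)s\rfloor\bigr)}{\gamma(\lfloor s\rfloor)}.
\]
Let $d$ denote the degree of polynomial growth of $\Gamma$. I would first choose $r_0 > 5$ so large that $(1-5/r)^d \geq 1 - \delta/3$ for every $r > r_0$. Then, using Pansu's theorem, I pick $\eta > 0$ and an integer $n_1$ such that $(c_\Gamma - \eta) n^d \leq \gamma(n) \leq (c_\Gamma + \eta) n^d$ for all $n \geq n_1$, with $\eta$ small enough so that $(c_\Gamma-\eta)/(c_\Gamma+\eta) \geq 1 - \delta/3$. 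Finally I choose $n_0$ large enough that for $s > n_0$ and $r > r_0$ the reduced radius satisfies $\lfloor(1-5/r)s\rfloor \geq n_1$ and that the error $\bigl(\lfloor(1-5/r)s\rfloor/((1-5/r)s)\bigr)^d$ is within $1-\delta/3$ of $1$. Multiplying the three factors $(1-5/r)^d$, $(c_\Gamma-\eta)/(c_\Gamma+\eta)$, and the floor correction yields a lower bound exceeding $1-\delta$, as required.

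The only delicate point is that the inequality $\gamma(n)\leq Cn^d$ alone is not enough: one really needs the existence of the limit $c_\Gamma$ from \eqref{eq.pansu}, since otherwise $\gamma((1-5/r)s)/\gamma(s)$ could oscillate around $(1-5/r)^d$ and fail to become close to $1$ uniformly in $s$. Apart from this input, the argument is just bookkeeping with the Pansu estimate and the floor function, which I expect to pose no conceptual obstacle.
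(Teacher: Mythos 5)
Your argument is correct and is precisely the one the paper intends: the paper gives no proof of Lemma \ref{l.smallbdr} beyond the remark that it ``follows from the result of Pansu (see Equation \eqref{eq.pansu})'', and your reduction to a single ball followed by the three-factor estimate (enlargement factor $(1-5/r)^d$, Pansu ratio $(c_\Gamma-\eta)/(c_\Gamma+\eta)$, floor correction) is the natural way to flesh that out, including your correct observation that the two-sided bound $\frac1C n^d\leq\gamma(n)\leq Cn^d$ alone would not suffice. No gaps.
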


\subsection{Averages on Groups of Polynomial Growth and a Transference Principle}
\label{ss.avgongrp}

We collect some useful results about averages on groups of polynomial
growth in this subsection. At the end of the subsection we will
discuss a transference principle, which will become essential later in Section
\ref{s.upcrineq}. We start with a
preliminary lemma, whose proof is straightforward.

\begin{lemma}
\label{l.ballgrowth}
Let $f$ be a nonnegative function on a group of polynomial growth $\Gamma$. Let $\{ B_1, \dots,
B_k\}$ be some disjoint balls in $\Gamma$ such that
\[
\avg{g \in B_i} f(g) > \beta \quad \text{for each } i=1,\dots,k.
\]
Let $B$ be a ball in $\Gamma$, containing all $B_i$'s, such that
\begin{equation*}
\avg{g \in B} f(g) < \alpha.
\end{equation*}
Then
\[
\frac{\sum\limits_{i=1}^k \cntm{B_i}}{\cntm{B}} < \frac{\alpha}{\beta}.
\]
\end{lemma}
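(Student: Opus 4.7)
The plan is to expand the averages into sums and chain two inequalities. First I would rewrite the hypothesis on each $B_i$ as
\[
\sum_{g \in B_i} f(g) > \beta \cntm{B_i} \quad (i=1,\dots,k),
\]
and sum over $i$. Since the $B_i$ are pairwise disjoint, the left-hand side equals $\sum_{g \in \bigsqcup_i B_i} f(g)$.

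Next I would use $f \geq 0$ together with $\bigsqcup_i B_i \subseteq B$ to dominate this sum by $\sum_{g \in B} f(g)$; this is the only place where nonnegativity of $f$ enters, and it is essential (without it, points of $B \setminus \bigsqcup_i B_i$ carrying large negative values of $f$ could wreck the bound). On the other hand, the hypothesis on $B$ gives
\[
\sum_{g \in B} f(g) < \alpha \cntm{B}.
\]
Chaining these inequalities yields
\[
\beta \sum_{i=1}^k \cntm{B_i} < \sum_{g \in B} f(g) < \alpha \cntm{B},
\]
and dividing by $\beta \cntm{B}$ (both strictly positive) produces the stated bound.

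There is essentially no obstacle here: the proof is purely a bookkeeping argument combining additivity of the counting measure over a disjoint finite union with monotonicity on nonnegative functions, together with rewriting the hypotheses in terms of unnormalized sums. The only subtle points worth flagging in the write-up are (i) the use of disjointness of the $B_i$'s, and (ii) the use of $f\ge 0$ to compare the sum over $\bigsqcup_i B_i$ with the sum over $B$.
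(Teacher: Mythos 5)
Your argument is correct and is precisely the straightforward computation the paper has in mind (the paper omits the proof of Lemma~\ref{l.ballgrowth}, calling it straightforward): unnormalize the averages, sum over the disjoint $B_i$, and use $f \geq 0$ to dominate by the sum over $B$. Nothing further is needed.
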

\noindent
We refine this result as follows.
\begin{lemma}
\label{l.uskip}
Let $\varepsilon \in (0,1)$. There is $n_0 \in \N$, depending only on
the group of polynomial growth $\Gamma$ and $\varepsilon$, such that
the following assertion holds. Given a nonnegative function $f$
on $\Gamma$, the condition
\begin{equation}
\label{eq.fluctcond}
\avg{g \in \bl(n)} f( g ) > \beta \quad \text{and} \quad
\avg{g \in \bl(m)} f( g ) < \alpha
\end{equation}
for some $n_0 \leq n < m$  and an interval $(\alpha,\beta) \subset \Rps$ implies that
\[
\frac m n > (1-\varepsilon) \left(\frac {\beta}{\alpha} \right)^{1/d}.
\]
\end{lemma}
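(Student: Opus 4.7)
The plan is to deduce the inequality directly from Lemma \ref{l.ballgrowth} combined with Pansu's asymptotic formula \eqref{eq.pansu}. The hypothesis \eqref{eq.fluctcond} says that $\bl(n)$ is a single ball with average of $f$ above $\beta$, and that it is contained in the larger ball $\bl(m)$ with average below $\alpha$. So Lemma \ref{l.ballgrowth} applies with the one-element family $\{\bl(n)\}$ inside $\bl(m)$, giving
\[
\frac{\gamma(n)}{\gamma(m)} = \frac{\cntm{\bl(n)}}{\cntm{\bl(m)}} < \frac{\alpha}{\beta},
\]
which I rewrite as $\gamma(m)/\gamma(n) > \beta/\alpha$.

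Next I would invoke Pansu's theorem: $\gamma(k)/k^d \to c_\Gamma \in (0,\infty)$ as $k \to \infty$, where $d$ is the degree of polynomial growth of $\Gamma$. Given $\varepsilon \in (0,1)$, pick $\varepsilon' \in (0,1)$ so small that $(1-\varepsilon')/(1+\varepsilon') > 1-\varepsilon$, and then choose $n_0 \in \N$ large enough that
\[
(1-\varepsilon')\, c_\Gamma \;\leq\; \gamma(k)/k^d \;\leq\; (1+\varepsilon')\, c_\Gamma \qquad \text{for all } k \geq n_0.
\]
Both $n, m \geq n_0$, so dividing the two bounds gives
\[
\left(\frac{m}{n}\right)^{\!d} \;=\; \frac{\gamma(m)}{\gamma(n)} \cdot \frac{\gamma(n)/n^d}{\gamma(m)/m^d} \;>\; \frac{\beta}{\alpha} \cdot \frac{1-\varepsilon'}{1+\varepsilon'}.
\]
Taking $d$-th roots and using the inequality $(1-\varepsilon')/(1+\varepsilon') > 1-\varepsilon$ (plus the monotonicity $x \mapsto x^{1/d}$ and the elementary bound $(1-\varepsilon)^{1/d} \geq 1-\varepsilon$ for $\varepsilon \in (0,1)$) yields
\[
\frac{m}{n} \;>\; (1-\varepsilon)\left(\frac{\beta}{\alpha}\right)^{\!1/d},
\]
as required.

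Since $n_0$ is chosen solely from $\varepsilon'$ (hence from $\varepsilon$) and the Pansu constant $c_\Gamma$ of $\Gamma$, it depends only on $\Gamma$ and $\varepsilon$, as claimed. The only real point to be careful about is the choice of $\varepsilon'$ in terms of $\varepsilon$ and $d$; I do not expect any genuine obstacle here, as the argument is a short combination of the two already-established ingredients (the disjoint-ball covering estimate and the Pansu asymptotic).
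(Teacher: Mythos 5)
Your argument is correct and follows essentially the same route as the paper: first Lemma \ref{l.ballgrowth} applied to the single ball $\bl(n)$ inside $\bl(m)$ gives $\cntm{\bl(m)}/\cntm{\bl(n)} > \beta/\alpha$, and then Pansu's limit \eqref{eq.pansu} converts this cardinality ratio into the ratio of radii for $n \geq n_0$. Your write-up merely makes the choice of $\varepsilon'$ and the two-sided Pansu estimate explicit, which the paper leaves implicit.
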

\begin{proof}
First of all, note that condition \eqref{eq.fluctcond} implies that
\[
\frac{\cntm{\bl(m)}}{\cntm{\bl(n)}} > \frac{\beta}{\alpha}
\]
for \emph{all} indexes $n<m$ (see the previous lemma). Using the result of Pansu (Equation
\eqref{eq.pansu}), we deduce that there is $n_0$ depending only on
$\Gamma$ and $\varepsilon$ such that for all
$n_0 \leq n<m$ we have
\[
\frac{m^d}{n^d}>(1-\varepsilon)^d \frac{\cntm{\bl(m)}}{\cntm{\bl(n)}}.
\]
This implies that
\[
\frac m n > (1-\varepsilon) \left(  \frac{\beta}{\alpha}\right)^{1/d},
\]
and the proof of the lemma is complete.
\end{proof}
\noindent
Lemma \ref{l.uskip} has the following straightforward corollary.
\begin{cor}
\label{c.skip}
For a constant $\varepsilon \in (0,1)$ and a group of polynomial growth $\Gamma$ let  $n_0:=n_0(\varepsilon)$ be given
by Lemma \ref{l.uskip}. Given a measure-preserving action of $\Gamma$
on a probability space $\prX$, a nonnegative function $f$ on $X$
and $x \in X$, the condition that the sequence
\[
\left( \avg{g \in \bl(i)} f(g \cdot x) \right)_{i=n}^m
\] 
fluctuates at least $k$ times across an interval $(\alpha, \beta)
\subset \Rps$ with $n>n_0$
implies that
\[
\frac m n > (1-\varepsilon)^{\lceil \frac k 2 \rceil} \left(
  \frac{\beta}{\alpha} \right)^{{\lceil \frac k 2 \rceil} \cdot \frac
  1 d}
\]
\end{cor}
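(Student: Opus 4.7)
The plan is to extract from the definition of fluctuations a chain of indices along which Lemma \ref{l.uskip} can be applied iteratively. By definition of $\calF_{(\alpha,\beta)}^k$, the assumption that the sequence $\left( \avg{g \in \bl(i)} f(g \cdot x) \right)_{i=n}^m$ fluctuates at least $k$ times across $(\alpha, \beta)$ provides indices $n \leq i_0 < i_1 < \dots < i_k \leq m$ such that $\avg{g \in \bl(i_j)} f(g \cdot x) > \beta$ whenever $j$ is even and $\avg{g \in \bl(i_j)} f(g \cdot x) < \alpha$ whenever $j$ is odd.

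Next, I would apply Lemma \ref{l.uskip} to the nonnegative function $g \mapsto f(g \cdot x)$ on $\Gamma$ for each pair of consecutive indices $(i_j, i_{j+1})$ with $j$ even. The monotonicity of the $i_j$'s together with $i_0 \geq n > n_0$ guarantees $i_j \geq n_0$ for every $j$, so the hypothesis of the lemma is met and yields
\[
\frac{i_{j+1}}{i_j} > (1-\varepsilon)\left(\frac{\beta}{\alpha}\right)^{1/d}
\]
for each such even $j$. For the pairs $(i_j, i_{j+1})$ with $j$ odd, the indices are strictly increasing integers, which gives the trivial bound $i_{j+1}/i_j \geq 1$.

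Finally, I would form the telescoping product
\[
\frac{m}{n} \;\geq\; \frac{i_k}{i_0} \;=\; \prod_{j=0}^{k-1} \frac{i_{j+1}}{i_j}
\]
and count: the set $\{0,1,\dots,k-1\}$ contains exactly $\lceil k/2 \rceil$ even integers, so the product of the strict lower bounds over the even $j$'s together with the trivial bounds over the odd $j$'s produces
\[
\frac{m}{n} > (1-\varepsilon)^{\lceil k/2 \rceil}\left(\frac{\beta}{\alpha}\right)^{\lceil k/2 \rceil \cdot \frac{1}{d}},
\]
as required. The argument is essentially bookkeeping built on top of Lemma \ref{l.uskip}; the only subtle point is the parity count that fixes the exponent at $\lceil k/2 \rceil$ rather than $k$ or $\lfloor k/2 \rfloor$, and verifying that the hypothesis $i_j \geq n_0$ propagates along the entire chain — neither of these is a serious obstacle.
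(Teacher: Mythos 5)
Your proof is correct and is exactly the argument the paper has in mind: the paper omits the proof entirely, calling the corollary "straightforward," and the intended reasoning is precisely your telescoping product over the $\lceil k/2\rceil$ downcrossing pairs $(i_j,i_{j+1})$ with $j$ even, each handled by Lemma \ref{l.uskip} applied to the nonnegative function $g\mapsto f(g\cdot x)$ on $\Gamma$. The parity count and the propagation of $i_j>n_0$ are handled correctly.
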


Finally, we will need an adapted version of the `easy direction' in Calder\'{o}n's
transference principle for groups of polynomial growth. Suppose that a
group $\Gamma$ of polynomial growth acts on a probability space
$\prX=(X,\calB,\mu)$ by measure-preserving transformations and that we
want to estimate the size of a measurable set $E$. Fix an integer
$m \in \Zp$. For an integer $L \in \N$ and a point $x \in
X$ we define the set
\[
B_{L,m,x}:= \{ g: \ g \cdot x \in E \text{  and  } \| g \| \leq L-m \} \subseteq \bl(L).
\]
The lemma below tells us that each universal upper bound on the
density of $B_{L,m,x}$ in $\bl(L)$ bounds the measure of $E$ from
above as well.
\begin{lemma}[Transference principle]
\label{l.caldtrans}
Suppose that for a given constant $t \in \Rp$ the following holds:
there is some $L_0 \in \N$ such that for all $L \geq L_0$ and for $\mu$-almost all $x \in
X$ we have
\[
\frac 1 {\cntm{\bl(L)}} \cntm{B_{L,m,x}} \leq t.
\]
Then
\[
\mu(E) \leq t.
\]
\end{lemma}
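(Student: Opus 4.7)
The plan is a double-counting argument followed by an application of Pansu's volume asymptotic. First I would write the counting function as an indicator sum,
\[
\cntm{B_{L,m,x}} = \sum_{g \in \bl(L-m)} \indi{E}(g \cdot x),
\]
valid for $L \geq \max(L_0, m)$. Integrating against $\mu$ over $X$ and using that each $g \in \Gamma$ acts by a measure-preserving transformation, so that $\int_X \indi{E}(g \cdot x)\,\ud\mu(x) = \mu(g^{-1} E) = \mu(E)$ for every $g$, I obtain
\[
\int_X \cntm{B_{L,m,x}}\,\ud\mu(x) = \cntm{\bl(L-m)}\,\mu(E).
\]

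Next, invoking the hypothesis $\cntm{B_{L,m,x}} \leq t\,\cntm{\bl(L)}$ for $\mu$-a.e.\ $x$ and integrating both sides yields
\[
\cntm{\bl(L-m)}\,\mu(E) \leq t\,\cntm{\bl(L)}, \quad\text{that is,}\quad \mu(E) \leq t \cdot \frac{\cntm{\bl(L)}}{\cntm{\bl(L-m)}}.
\]
To conclude, I would let $L \to \infty$ keeping $m$ fixed. Pansu's theorem (Equation \eqref{eq.pansu}) gives $\cntm{\bl(L)}/L^d \to c_\Gamma > 0$ and $\cntm{\bl(L-m)}/(L-m)^d \to c_\Gamma$, so the ratio $\cntm{\bl(L)}/\cntm{\bl(L-m)}$ converges to $1$, producing the desired bound $\mu(E) \leq t$.

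There is no real obstacle here: the Fubini interchange is merely a finite sum swap, and the only substantive ingredient is Pansu's sharp limit. Without that limit, the two-sided polynomial bound on $\gamma(n)$ would only yield $\mu(E) \leq C t$ for some constant $C \geq 1$ depending on $\Gamma$, which would be too weak for the quantitative applications of this lemma in Section \ref{s.upcrineq}.
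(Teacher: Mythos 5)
Your proof is correct and is essentially the paper's own argument: both rest on the exchange $\int_X \indi{E}(g\cdot x)\,d\mu = \mu(E)$ for each $g$ and on the fact that $\cntm{\bl(L-m)}/\cntm{\bl(L)} \to 1$ as $L \to \infty$ (Pansu's limit \eqref{eq.pansu}). The only cosmetic difference is bookkeeping — the paper sums over all of $\bl(L)$ and absorbs the annulus $\bl(L)\setminus\bl(L-m)$ as an error term, while you sum over $\bl(L-m)$ and correct by the volume ratio — and your closing remark that the crude two-sided polynomial bound would only give $\mu(E)\leq C t$ is accurate.
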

\begin{proof}
Indeed, since $\Gamma$ acts on $\prX$ by measure-preserving
transformations, we have
\[
\sum\limits_{g \in \bl(L)} \int\limits_{\prX} \indi{E}(g \cdot x) d \mu = \cntm{\bl(L)} \mu(E).
\]
Then
\begin{align*}
\mu(E) &= \int\limits_{\prX} \left( \frac 1 {\cntm{\bl(L)}} \sum\limits_{g \in
      \bl(L)} \indi{E} (g \cdot x) \right) d \mu \leq \\
&\leq \int\limits_{\prX} \left( \frac{\cntm{B_{L,m,x}} + \cntm{\bl(L)
  \setminus \bl(L-m)}}{\cntm{\bl(L)}}  \right) d \mu,
\end{align*}
and the proof is complete since $L$ can be arbitrarily large and $\Gamma$ is a
group of polynomial growth.
\end{proof}

\subsection{Vitali Covering Lemma}
\label{ss.vitcov}
In this section we discuss the generalization of Effective Vitali
Covering lemma from \cite{kw1999} to groups of polynomial growth. We fix some notation
first. Given a number $t \in \Rp$ and a ball $B=\bl(x,r) \subseteq
\prX$ in a metric space $\prX$, we denote by $t \cdot B$ the
$t$-enlargement of $B$, i.e., the ball $\bl(x,rt)$. We state the
basic finitary Vitali covering lemma first, whose proof is well-known.

\begin{lemma}
\label{l.fvc}
Let $\calB:=\{ B_1,\dots,B_n \}$ be a finite collection of balls in a metric
space $\prX$. Then there is a finite subset $\{ B_{j_1},\dots,
B_{j_m}\} \subseteq \calB$ consisting of pairwise disjoint balls such that
\[
\bigcup\limits_{i=1}^n B_i \subseteq  \bigcup\limits_{l=1}^m 3 \cdot B_{j_l}.
\]
\end{lemma}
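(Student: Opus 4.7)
The plan is to use a classical greedy selection argument, which does not require any of the group-theoretic structure introduced above and works in any metric space. First I would relabel the balls so that their radii are in nonincreasing order, writing $B_i = \bl(x_i, r_i)$ with $r_1 \geq r_2 \geq \dots \geq r_n$. Then, scanning the list from $i=1$ to $i=n$, I would include $B_i$ in the selected subfamily $\{B_{j_1}, \dots, B_{j_m}\}$ precisely when $B_i$ is disjoint from every ball selected earlier. By construction the chosen subfamily is pairwise disjoint, and since $\calB$ is finite the procedure terminates after at most $n$ steps.

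To verify the covering property, I would fix an arbitrary index $i$ and show that $B_i \subseteq 3 \cdot B_{j_l}$ for some $l$. If $B_i$ itself was selected there is nothing to prove. Otherwise $B_i$ must have failed the disjointness test, so there exists a previously selected $B_{j_l}$ with $j_l < i$, hence $r_{j_l} \geq r_i$, and $B_i \cap B_{j_l} \neq \emptyset$. Picking any $z \in B_i \cap B_{j_l}$, for every $y \in B_i$ the triangle inequality yields
\[
d(y, x_{j_l}) \leq d(y, x_i) + d(x_i, z) + d(z, x_{j_l}) \leq 2 r_i + r_{j_l} \leq 3 r_{j_l},
\]
so $y \in 3 \cdot B_{j_l}$, as required.

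The argument is elementary and purely metric, with no serious obstacle. I mention it here only because the same greedy skeleton will presumably reappear in the effective version of the covering lemma promised in Section~\ref{ss.vitcov}, where the combinatorics of selecting disjoint balls will have to be coupled with the quantitative control over $r$-interiors and $r$-boundaries provided by Lemma~\ref{l.smallbdr}.
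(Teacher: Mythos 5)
Your greedy selection argument is correct and is precisely the standard proof that the paper alludes to when it calls the lemma ``well-known'': sorting by nonincreasing radius guarantees $r_{j_l}\geq r_i$ for the intersecting selected ball, and the triangle-inequality estimate $d(y,x_{j_l})\leq 2r_i+r_{j_l}\leq 3r_{j_l}$ is exactly the right computation. Note only that the enlargement $3\cdot B$ depends on a designated center and radius for $B$ (as in the paper's definition of $t\cdot B$), which your setup correctly assumes.
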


Infinite version of this lemma is used, for example, in the proof of the
standard Vitali
covering theorem, which can be generalized to arbitrary doubling
measure spaces. However, the standard Vitali covering theorem is not
sufficient for our purposes. It was shown in \cite{kw1999} that the groups $\Z^d$ for $d \in \N$, which are of course doubling measure spaces when endowed
with the counting measure and the word metric, enjoy a particularly
useful `effective' version of the theorem. We prove a generalization
of this result to groups of polynomial growth below. 

\begin{thm}[Effective Vitali covering]
\label{t.evc}
Let $\Gamma$ be a group of polynomial growth of degree $d$. Let $C \geq 1$ be a
constant such that
\[ 
\frac 1 C m^d \leq \gamma(m) \leq C m^d \quad \text{ for all } m \in \N
\]
and let $c:=3^d C^2$. Let $R,n,r>2$ be some
fixed natural numbers and $X \subseteq
\bl(R)$ be a subset of the ball $\bl(R) \subset \Gamma$. Suppose that
to each $p \in X$ there are associated balls $A_1(p),\dots,A_n(p)$
such that the following assertions hold:
\begin{aufzi}
\item $p \in A_i(p) \subseteq \bl(R)$ for $i=1,\dots,n$;
\item For all $i=1,\dots,n-1$ the $r$-enlargement of $A_i(p)$ is
  contained in $A_{i+1}(p)$.
\end{aufzi}
Let
\[
S_i:=\bigcup\limits_{p \in X} A_i(p) \quad (i=1,\dots,n).
\]
There is a disjoint subcollection $\calC $ of $\{ A_i(p) \}_{p \in X, i=1,\dots,n}$ such that the
following conclusions hold:
\begin{aufzi}
\item  The union of $\left( 1+ \frac 4 {r-2} \right)$-enlargements of
  balls in $\calC$ together with the the set $S_n \setminus S_1$
  covers all but at most $\left( \frac {c-1} c \right)^n$ of $S_n$;
\item The measure of the union of $\left( 1+ \frac 4 {r-2} \right)$-enlargements of
  balls in $\calC$ is at least $(1 - \left( \frac {c-1} c \right)^n)$
  times the measure of $S_1$.
\end{aufzi}
\end{thm}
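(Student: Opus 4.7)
The plan is to construct $\calC$ by an $n$-round iteration of the finitary Vitali lemma (Lemma \ref{l.fvc}), one round for each level. The polynomial growth bound supplies the doubling constant $c = 3^d C^2$ for three-fold enlargements in $\Gamma$: right-invariance of the word metric together with the upper and lower bounds $\frac{1}{C}m^d \leq \gamma(m) \leq C m^d$ yield $\cntm{3 \cdot B} \leq c \cdot \cntm{B}$ for every ball $B \subset \Gamma$. In particular, any disjoint collection of balls whose three-enlargements cover a region $R$ has aggregate measure at least $\cntm{R}/c$. Each round of the iteration will capture a fresh $1/c$ fraction of the currently uncovered portion of $S_1$, producing the desired geometric decay $\left(\frac{c-1}{c}\right)^n$.

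Concretely, I run the index $k$ from $n$ down to $1$, maintaining a disjoint family $\calC_k$ (starting from $\calC_{n+1} = \emptyset$) and the union $U_k$ of the $(1+4/(r-2))$-enlargements of its members. At round $k$ I set $R_k := S_1 \setminus U_{k+1}$ and apply Lemma \ref{l.fvc} to the collection $\{A_k(p) : p \in R_k\}$, each of whose members contains its center $p \in R_k$, so the union of this collection contains $R_k$. The lemma returns a disjoint subfamily whose three-enlargements cover $R_k$; the geometric step described below ensures these new balls are disjoint from every prior selection, and I append them to $\calC_{k+1}$ to form $\calC_k$. The doubling bound then gives $\cntm{R_{k-1}} \leq \tfrac{c-1}{c}\cntm{R_k}$, so after $n$ rounds $\cntm{S_1 \setminus U_1} \leq \left(\tfrac{c-1}{c}\right)^n \cntm{S_1}$. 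This is conclusion (b); conclusion (a) follows because every point of $S_n$ missed by $U_1 \cup (S_n \setminus S_1)$ must lie in $S_1 \setminus U_1$, of measure at most $\left(\tfrac{c-1}{c}\right)^n \cntm{S_1} \leq \left(\tfrac{c-1}{c}\right)^n \cntm{S_n}$.

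The main obstacle is the geometric argument justifying cross-round disjointness, and with it the precise value of the enlargement factor $1 + \tfrac{4}{r-2} = \tfrac{r+2}{r-2}$. The nesting hypothesis $r \cdot A_i(p) \subseteq A_{i+1}(p)$ only controls the radius chain for each fixed $p$, so the separation must be transferred across different centers via triangle inequalities. The key claim to prove is the following dichotomy: if a newly chosen level-$k$ ball $A_k(p)$ intersects some prior $A_{k'}(p') \in \calC_{k+1}$ with $k' > k$, then one may force $p$ into $(1+4/(r-2)) \cdot A_{k'}(p') \subseteq U_{k+1}$ — contradicting $p \in R_k$. Direct triangle-inequality bookkeeping on $\bl(y,\rho') = A_{k'}(p')$ and $\bl(x,\rho) = A_k(p)$ shows that nonempty intersection together with $p \notin (1+4/(r-2))\cdot A_{k'}(p')$ forces the strict radius ratio $\rho' < \tfrac{r-2}{2}\rho$, and the nested $r$-enlargement structure combined with the order in which selections were made is then used to exclude this possibility; the quantity $1 + \tfrac{4}{r-2}$ emerges as the sharpest enlargement factor for which the dichotomy survives at $r > 2$. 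Once this geometric step is in hand, the measure bookkeeping in the preceding paragraph is routine.
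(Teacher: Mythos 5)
Your top-level architecture (iterate the finitary Vitali lemma from level $n$ down to level $1$, with the doubling constant $c=3^dC^2$ capturing a fresh $1/c$-fraction per round and the enlargement factor $s=1+\frac{4}{r-2}$ absorbing cross-level overlaps) is the paper's, but the step you yourself flag as the main obstacle contains a genuine gap, and the idea needed to close it is absent. Your dichotomy correctly reduces to: a new ball $A_k(p)$ meeting a previously selected $A_{k'}(p')$ with $k'>k$ either lies in $s\cdot A_{k'}(p')$ or satisfies $\rho'<\frac{r-2}{2}\rho$, in which case the nesting hypothesis gives $A_{k'}(p')\subseteq r\cdot A_k(p)\subseteq A_{k+1}(p)\subseteq A_{k'}(p)$. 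But ``the order in which selections were made'' does not exclude this second branch: if $A_{k'}(p)$ was rejected at round $k'$ because it met some other selected ball $B$ of comparable radius, a much smaller ball $A_{k'}(p')\subseteq A_{k'}(p)$ disjoint from $B$ can still be selected, and then $p$ lies only in $3\cdot B$, not in $s\cdot B$ (note $3>s$ for $r>6$), so $p$ survives into $R_k$ and disjointness of $\calC$ fails. Nothing in the hypotheses relates the radii of one point's balls to another's. What closes this in the paper is restricting every selection to \emph{maximal} level-$j$ balls (Remark \ref{r.maxball} and Claim 3 of the proof): the inclusion $A_{k'}(p')\subseteq A_{k'}(p)$ then directly contradicts maximality of the selected ball. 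You must build maximality into your construction; it is not a cosmetic choice.

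Two further points of bookkeeping do not go through as written. First, $R_k=S_1\setminus U_{k+1}$ may contain points of $S_1\setminus X$, for which no ball $A_k(\cdot)$ is defined, so $\{A_k(p):p\in R_k\}$ need not cover $R_k$; you should instead take all maximal level-$k$ balls meeting $R_k$ (or, as the paper does, all maximal level-$k$ balls disjoint from the previously selected balls). Second, and more seriously, $\cntm{R_{k-1}}\le\frac{c-1}{c}\cntm{R_k}$ does not follow from the Vitali lemma: the selected balls have total measure at least $\frac 1c\cntm{\bigcup_p A_k(p)}\ge\frac 1c\cntm{R_k}$, but they live in $S_k$, not in $R_k$, so their intersection with $R_k$ --- which is what actually shrinks $R_{k-1}$ --- can be far smaller than a $\frac 1c$-fraction of $R_k$. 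Your claimed conclusion $\cntm{S_1\setminus U_1}\le\left(\frac{c-1}{c}\right)^n\cntm{S_1}$ is strictly stronger than both (a) and (b) (conclusion (a) only bounds $\cntm{S_1\setminus U_1}$ by $\left(\frac{c-1}{c}\right)^n\cntm{S_n}$, and (b) bounds the measure of the union of enlargements, which may lie largely outside $S_1$; genuine coverage of $S_1$ is exactly what requires the extra hypothesis $\cntm{S_n}\le(c+1)\cntm{S_1}$ of Corollary \ref{c.evccor}). The paper's accounting avoids this by comparing $\cntm{S_1}$ with the total measure of $U_n\sqcup U_{n-1}\sqcup\cdots$, where each $U_j$ is a union of newly selected balls chosen disjoint from all earlier ones, and by tracking coverage of $S_n$ (not of $S_1$) separately for conclusion (a).
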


\begin{rem}
\label{r.maxball}
Prior to proceeding to the proof of the theorem we make the following
remarks. Firstly, we do not require the balls $A_i(p)$ from the
theorem to be centered around $p$. Secondly, the balls of the form $A_i(p)$ for
$i=1,\dots,n$ and $p \in X$ will be called \textbf{$i$-th level
balls}. An $i$-th level ball $A_i(p)$ is called \textbf{maximal} if it
is not contained in any other $i$-th level ball. It is clear that each
$S_i$ is the union of maximal $i$-level balls as well. It will follow from the proof below that the balls in $\calC$ can be chosen to
be maximal.
\end{rem}

\begin{proof}
To simplify the notation, let
\[
s:=1+\frac 4 {r-2}
\]
be the scaling factor that is used in the theorem. The main idea of
the proof is to cover a positive fraction of $S_n$ by a disjoint union
of $n$-level balls via Lemma \ref{l.fvc}, then cover a positive
fraction of what remains in $S_{n-1}$ by a disjoint union of
$(n-1)$-level balls and so on. Thus we begin by covering a fraction of $S_n$ by $n$-level balls. Let $\calC_n \subseteq \{ A_n(p) \}_{p \in X}$ be the collection of
disjoint balls, obtained by applying Lemma \ref{l.fvc} to the
collection of all $n$-th level \emph{maximal} balls. For
every ball $B=\bl(p,m) \in \calC_n$ we have 
\[
\cntm{3 \cdot B} \leq C (3m)^d \leq C^2 3^d \cntm{B}, 
\]
hence
\[
\cntm{S_n} \leq \cntm{\bigcup\limits_{B \in \calC_n} 3 \cdot B} \leq
\sum\limits_{B \in \calC_n} c \cntm{B}
\]
and so
\[
\cntm{ \bigsqcup\limits_{B \in \calC_n} B} \geq \frac 1 c \cntm{S_n}. 
\]
Let $U_n:=\bigsqcup\limits_{B \in \calC_n} B$. The computation above
shows that 
\begin{equation}
\label{eq.stepa}
U_n \text{ covers at least } \frac 1 c \text{-fraction of } S_n
\end{equation}
and
\begin{equation}
\label{eq.stepb}
\cntm{S_1} - \cntm{U_n} \leq \cntm{S_1} - \frac 1 c \cntm{S_1} =
\frac{c-1} c \cntm{S_1}.
\end{equation}

We proceed by restricting to $(n-1)$-level balls. Assume for the moment that the
following claim is true.

\begin{claimn}
If a ball $A_{n-1}(p)$ has a nonempty intersection with $U_n$, then
$A_{n-1}(p)$ is contained in the $s$-enlargement of the ball in $\calC_n$ that it intersects.
\end{claimn}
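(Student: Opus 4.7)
Write $A_{n-1}(p)=\bl(x,m)$ and $B=\bl(y,l)\in\calC_n$, pick $w\in A_{n-1}(p)\cap B$, so that $d(x,y)\leq m+l$. My plan is to establish the estimate $l>(r-2)m/2$: substituting this into the triangle inequality
\[
d(z,y)\leq d(z,x)+d(x,w)+d(w,y)\leq 2m+l\quad (z\in A_{n-1}(p))
\]
yields $d(z,y)<l+\tfrac{4l}{r-2}=sl$, hence $A_{n-1}(p)\subseteq sB$, which is the claim.

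The estimate on $l$ would be proved by contradiction, exploiting the maximality of $B$ as an $n$-th level ball (recall from Remark~\ref{r.maxball} that $\calC_n$ is obtained by applying Lemma~\ref{l.fvc} to the collection of \emph{maximal} $n$-th level balls, so $B$ is itself a maximal $n$-th level ball). Suppose $l\leq(r-1)m/2$. Then for every $z\in B$ one has $d(z,x)\leq d(z,y)+d(y,x)\leq l+(m+l)\leq rm$, so $B\subseteq\bl(x,rm)$. By hypothesis~(ii), $\bl(x,rm)\subseteq A_n(p)$, whence $B\subseteq A_n(p)$, and maximality of $B$ forces $B=A_n(p)$ as sets.

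To reach a contradiction I use the containment $B=A_n(p)\supseteq\bl(x,rm)$. Pick any $z\in\Gamma$ with $d(z,x)=rm$; such a $z$ exists because spheres in an infinite finitely generated group are always nonempty (by strict monotonicity of the growth function). Then $z,x\in B$, so both $d(z,y)\leq l$ and $d(x,y)\leq l$ hold, and the triangle inequality gives $rm=d(z,x)\leq d(z,y)+d(y,x)\leq 2l$, i.e.\ $l\geq rm/2$. Combined with the standing assumption $l\leq(r-1)m/2$ this forces $r\leq r-1$, a contradiction. Hence $l>(r-1)m/2>(r-2)m/2$, completing the proof. The main subtle point is the invocation of maximality to deduce $B=A_n(p)$; the nonemptiness of spheres is a minor bookkeeping step.
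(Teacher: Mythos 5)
Your proof is correct and follows essentially the same route as the paper's: the paper establishes a single more general claim (for any levels $i<j$ and a maximal $j$-th level ball) by the same mechanism --- if the $s$-enlargement of the maximal ball fails to contain $A_i(p)$, then its radius is so small that the $r$-enlargement of $A_i(p)$, hence $A_n(p)$ by hypothesis (b), swallows it, contradicting maximality. Your only addition is the explicit treatment of the degenerate case $B=A_n(p)$ via nonemptiness of spheres (valid since $\Gamma$ is infinite, the only case of interest), a possibility the paper's proof passes over in silence.
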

\noindent Let
\begin{align*}
\widetilde \calC_{n-1}:=\{ A_{n-1}(p): \ &A_{n-1}(p) \text{ is a
                                           maximal } (n-1)-\text{level
                                           ball} \\
&\text{ such that } A_{n-1}(p) \cap U_n = \varnothing\}
\end{align*}
be the collection of all maximal $(n-1)$-level balls disjoint from
$U_n$ and let $\widetilde U_{n-1}$ be its union. We
apply Lemma \ref{l.fvc} once again to obtain a collection $\calC_{n-1}
\subseteq \widetilde \calC_{n-1}$ of pairwise disjoint maximal balls such that
\[
\cntm{ \bigsqcup\limits_{B \in \calC_{n-1}} B} \geq \frac 1 c
\cntm{\widetilde U_{n-1}}.
\]
Let $U_{n-1}:=\bigsqcup\limits_{B \in \calC_{n-1}} B$. In order to show that
\begin{equation}
\label{eq.s1est}
\cntm{S_1} - \cntm{ \bigcup\limits_{B \in \calC_n} \left(s  \cdot B
  \right) \cup U_{n-1}} \leq \left( \frac{c-1} c \right)^2 \cntm{S_1}
\end{equation}
it suffices to prove that
\begin{equation}
\label{eq.snm1}
\cntm{ \bigcup\limits_{B \in \calC_n} \left(s  \cdot B
  \right) \cup U_{n-1}} \geq \cntm{U_n} + \frac 1 c \cntm{S_{n-1}
  \setminus U_n},
\end{equation}
due to the obvious inequalities
\begin{align*}
\cntm{S_{n-1} \setminus U_n} \geq &\cntm{S_{n-1}} - \cntm{U_n} \geq
                               \cntm{S_1} - \cntm{U_n},\\
&\cntm{U_n} \geq \frac 1 c \cntm{S_1}.
\end{align*}
We decompose the set $S_{n-1} \setminus U_n$ as follows
\[
S_{n-1} \setminus U_n = \widetilde U_{n-1} \sqcup \left( S_{n-1}
  \setminus (U_n \cup \widetilde U_{n-1})\right).
\]
The part $S_{n-1}
  \setminus (U_n \cup \widetilde U_{n-1})$ is covered by the $(n-1)$-level balls
  intersecting $U_n$. Hence, if Claim 1 above is true, the set $S_{n-1}
  \setminus (U_n \cup \widetilde U_{n-1})$ is covered by the $s$-enlargements of
  balls in $\calC_n$. Next, $U_{n-1}$ covers at least $\frac 1 c$
  fraction of $\widetilde U_{n-1}$. It follows that the set $\bigcup\limits_{B \in \calC_n} \left(s  \cdot B
  \right) \cup U_{n-1}$ covers the set $U_n$ and at least $\frac 1
  c$-fraction of the set $S_{n-1} \setminus U_n$. Thus we have proved inequalities \eqref{eq.snm1} and \eqref{eq.s1est}. A similar
  argument shows that 
\begin{align}
\label{eq.2ndstepa}
\bigcup\limits_{B \in \calC_n} \left(s  \cdot B
  \right) \cup & \bigcup\limits_{B \in \calC_{n-1}} \left(s  \cdot B
  \right) \cup (S_n \setminus S_{n-1}) \text{ covers all but}
\\ &\text{ at most } \left( 1 - \frac 1 c\right)^2 \text{ of } S_n. \nonumber
\end{align}
Comparing Equations \eqref{eq.2ndstepa} and \eqref{eq.s1est} to
the statements $\mathrm{(a)}$ and $\mathrm{(b)}$ of the theorem, we see that the proof would be complete apart from Claim 1 if $n$ was equal to $2$.

So we proceed further to $(n-2)$-level balls and use the following claim.
\begin{claimn}
If a ball $A_{n-2}(p)$ has a nonempty
intersection with $U_n \cup U_{n-1}$, then
$A_{n-2}(p)$ is contained in the $s$-enlargement of the ball in
$\calC_n \cup \calC_{n-1}$ that it intersects. 
\end{claimn}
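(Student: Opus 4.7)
Claim 2 is the level-down analog of Claim 1, and I would prove it by the same recipe, splitting into two cases according to the level of the ball $B \in \calC_n \cup \calC_{n-1}$ intersecting $A_{n-2}(p)$.

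\emph{Case $B \in \calC_n$.} The nesting hypothesis (ii) gives $A_{n-2}(p) \subseteq A_{n-1}(p)$, so $A_{n-1}(p)$ also meets $B$. Applying Claim 1 to $A_{n-1}(p)$ yields $A_{n-1}(p) \subseteq s \cdot B$, and a fortiori $A_{n-2}(p) \subseteq s \cdot B$.

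\emph{Case $B \in \calC_{n-1}$.} I would repeat the proof of Claim 1 verbatim with all indices shifted down by one, using that $\calC_{n-1}$ was produced by Lemma \ref{l.fvc} from the family $\widetilde{\calC}_{n-1}$ of maximal $(n-1)$-level balls disjoint from $U_n$, together with the nesting relation $r \cdot A_{n-2}(p) \subseteq A_{n-1}(p)$ supplied by (ii). Concretely, let $M$ be the maximal $(n-1)$-level ball containing $A_{n-1}(p)$. If $M$ meets $U_n$ then so does $A_{n-1}(p)$, and Claim 1 reduces the situation to the preceding case. Otherwise $M \in \widetilde{\calC}_{n-1}$, and the greedy Vitali selection that produced $\calC_{n-1}$ exhibits a ball $B \in \calC_{n-1}$ meeting $A_{n-2}(p)$ whose radius is at least $r$ times that of $A_{n-2}(p)$; the same triangle-inequality argument as in Claim 1 then yields $A_{n-2}(p) \subseteq s \cdot B$.

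\emph{Main obstacle.} The delicate point, both in Claim 1 and in its level-down repetition for Claim 2, is to single out the correct ball of the Vitali selection: among the (possibly several) elements of $\calC_n$ (respectively $\calC_{n-1}$) that $A_{n-2}(p)$ meets, one must identify the ``dominating'' one, whose radius --- by virtue of the greedy Vitali step applied to the maximal container of $A_{n-1}(p)$ --- is comparable to the radius of that maximal container, and is therefore at least $r$ times the radius of $A_{n-2}(p)$ by hypothesis (ii). Once such a $B$ is in hand, the containment $A_{n-2}(p) \subseteq s \cdot B$ reduces to the direct estimate
\[
d(w,y)\; \leq\; d(w,z) + d(z,y)\; \leq\; 2\,\mathrm{rad}(A_{n-2}(p)) + \mathrm{rad}(B)\; \leq\; s \cdot \mathrm{rad}(B),
\]
valid for any $w \in A_{n-2}(p)$, any $z \in A_{n-2}(p) \cap B$, and $y$ the centre of $B$, with the last inequality following from $s = 1 + 4/(r-2)$ and the radius comparison $\mathrm{rad}(B) \geq r \cdot \mathrm{rad}(A_{n-2}(p))$.
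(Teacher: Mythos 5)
Your first case ($B \in \calC_n$) is fine: the nesting hypothesis gives $A_{n-2}(p)\subseteq A_{n-1}(p)$, so $A_{n-1}(p)$ meets $B$, Claim 1 yields $A_{n-1}(p)\subseteq s\cdot B$, and a fortiori $A_{n-2}(p)\subseteq s\cdot B$. The gap is in the case $B\in\calC_{n-1}$. You try to extract the radius comparison $\mathrm{rad}(B)\geq r\cdot\mathrm{rad}(A_{n-2}(p))$ from the greedy Vitali selection, but that selection was run on the family $\widetilde\calC_{n-1}$ of maximal $(n-1)$-level balls: it compares a selected ball only with other maximal $(n-1)$-level balls, and it produces a dominating selected ball meeting the maximal container $M$ of $A_{n-1}(p)$ --- not one meeting $A_{n-2}(p)$. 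That dominating ball may touch $M$ on the far side and miss $A_{n-2}(p)$ entirely, while the member of $\calC_{n-1}$ that $A_{n-2}(p)$ actually meets can be a different, much smaller one about which the greedy step says nothing. (Note also that Lemma \ref{l.fvc} as stated carries no radius information; you would be importing a feature of one particular proof of it.) Finally, the bound $\mathrm{rad}(B)\geq r\cdot\mathrm{rad}(A_{n-2}(p))$ is stronger than what is true: nothing prevents $A_{n-2}(p)$ from meeting a ball $B\in\calC_{n-1}$ with, say, $\mathrm{rad}(B)=\tfrac r2\,\mathrm{rad}(A_{n-2}(p))$, a configuration in which the claimed containment still holds.

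The mechanism that actually closes this case is the maximality of $B$ at level $n-1$ combined with the nesting $r\cdot A_{n-2}(p)\subseteq A_{n-1}(p)$, argued by contradiction; this is how the paper disposes of all the claims at once, as a single statement for any $i<j$ with $A_j(q)$ maximal. Suppose $A_{n-2}(p)\not\subseteq s\cdot B$ and write $r_1,r_2$ for the radii of $A_{n-2}(p)$ and $B$ and $x,y$ for their centers. Failure of the containment forces $\frac{4r_2}{r-2}\leq 2r_1$, i.e.\ $rr_1\geq 2r_1+2r_2\geq d(x,y)+r_1+r_2$ since the balls meet, whence $B\subseteq r\cdot A_{n-2}(p)\subseteq A_{n-1}(p)$, contradicting that $B$ is a maximal $(n-1)$-level ball. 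Your closing triangle-inequality display is the right kind of estimate, but the radius comparison feeding it has to come from this contradiction argument, not from the Vitali selection.
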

We let $\calC_{n-2}$ be the collection of all maximal $(n-2)$-level balls disjoint from
$U_n \cup U_{n-1}$ and let $\widetilde U_{n-2}$ be its union. We
apply Lemma \ref{l.fvc} once again to obtain a collection $\calC_{n-2}
\subseteq \widetilde \calC_{n-2}$ of pairwise disjoint balls such that
\[
\cntm{ \bigsqcup\limits_{B \in \calC_{n-2}} B} \geq \frac 1 c
\cntm{\widetilde U_{n-2}}
\] 
and let $U_{n-2}:=\bigsqcup\limits_{B \in \calC_{n-2}} B$. Similar arguments show that
\begin{equation*}
\cntm{S_1} - \cntm{ \bigcup\limits_{B \in \calC_n} \left(s  \cdot B
  \right) \cup \bigcup\limits_{B \in \calC_{n-1}} \left(s  \cdot B
  \right) \cup U_{n-2}} \leq \left( \frac{c-1} c \right)^3 \cntm{S_1}
\end{equation*}
and that the union of $s$-enlargements of balls in $\calC_n$,
$\calC_{n-1}$ and $\calC_{n-2}$, together with $S_n \setminus S_{n-2}$, covers all but
at most $\left( 1 - \frac 1 c\right)^3$ of $S_n$.

It is obvious that one can continue in this way down to the $1$-st
level balls, using the obvious
generalization of Claim 2. This would yield a collection of maximal balls
\[
\calC:=\bigcup\limits_{i=1}^n \calC_i
\]
so that the union of $s$-enlargements of balls in $\calC$ together
with $S_n \setminus S_1$ covers all but most $\left( 1 -\frac 1
  c\right)^n$ of $S_n$ and that the measure of the union of these
$s$-enlargements is at least $\left( 1- \left( 1 - \frac 1
    c\right)^n\right)$ times the measure of $S_1$.

We conclude that the proof is complete once we prove the claims above and their
generalizations. For this it suffices to prove the following statement:
\begin{claimn}
If $1 \leq i < j \leq n$ and $A_j(q)$ is a maximal ball, then for all
$p \in X$
\[
A_i(p) \cap A_j(q) \neq \varnothing \Rightarrow A_{i}(p) \subseteq s \cdot A_j(q).
\]
\end{claimn}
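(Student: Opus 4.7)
The plan is to show $\rho \leq \frac{2R}{r-2}$, where $A_i(p) = \bl(y,\rho)$ and $A_j(q) = \bl(z,R)$; the conclusion then follows from a single triangle inequality. Since $A_i(p) \cap A_j(q) \neq \varnothing$ gives $d(y,z) \leq \rho + R$, every $w \in A_i(p)$ satisfies $d(w,z) \leq 2\rho + R \leq \frac{4R}{r-2} + R = sR$, hence $A_i(p) \subseteq s \cdot A_j(q)$.

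I would prove $\rho \leq \frac{2R}{r-2}$ by contradiction. Assume $\rho > \frac{2R}{r-2}$, equivalently $r\rho > 2R + 2\rho$. By the triangle inequality, every $v \in A_j(q)$ satisfies $d(v,y) \leq 2R + \rho < r\rho$, so $A_j(q) \subseteq r \cdot A_i(p)$. Iterating assertion (ii) of the theorem through the chain $r \cdot A_i(p) \subseteq A_{i+1}(p) \subseteq \cdots \subseteq A_j(p)$ (each step uses condition (ii) together with the fact that $r$-enlargements are superset operations), one obtains $A_j(q) \subseteq A_j(p)$, and maximality of $A_j(q)$ upgrades this inclusion of $j$-level balls to the set equality $A_j(q) = A_j(p)$.

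The argument then reduces to extracting the bare radius comparison $r\rho \leq R$ from $\bl(y,r\rho) \subseteq \bl(z,R)$, which together with $r\rho > 2R$ gives the desired contradiction. This is the main obstacle: in a general metric space the containment of balls need not imply any comparison of radii. In our setting, however, right-invariance of the word metric makes translation an isometry, so $\cntm{\bl(y,r\rho)} = \gamma(r\rho)$ and $\cntm{\bl(z,R)} = \gamma(R)$, forcing $\gamma(r\rho) \leq \gamma(R)$. The growth function $\gamma$ is strictly increasing on $\N$, since the partial products of any reduced word of length $m$ realise every integer norm from $0$ to $m$; therefore $r\rho \leq R$, completing the proof.
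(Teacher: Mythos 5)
Your proof is correct and follows essentially the same route as the paper: assuming the containment fails, the radius inequality $r\rho \geq 2\rho + 2R$ together with the triangle inequality yields $A_j(q) \subseteq r \cdot A_i(p) \subseteq A_{i+1}(p) \subseteq \cdots \subseteq A_j(p)$, contradicting the maximality of $A_j(q)$. The only difference is that you explicitly close the borderline case $A_j(q) = A_j(p)$ (which the paper's one-line ``not maximal, contradiction'' passes over) by comparing cardinalities and using strict monotonicity of the growth function; that extra care is welcome, but the core argument is identical.
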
 
Suppose this is not the case. Let $x,y$ be the centers and $r_1,r_2$
be the radii of $A_{i}(p)$
and $A_j(q)$ respectively. Recall that $s = 1+\frac 4 {r-2}$. Since
the $s$-enlargement of $A_j(q)$ does not contain $A_i(p)$, it follows
$\frac {4 r_2} {r-2} \leq 2 r_1$, hence 
\[
r r_1 \geq 2 r_1 + 2 r_2.
\]
The intersection of $A_i(p)$ and $A_j(q)$ is nonempty, hence $d \leq
r_1 + r_2$. This implies that 
\[
r r_1 \geq d+r_1+r_2,
\]
so the $r$-enlargement of the ball $A_i(p)$ contains $A_j(q)$. Since
$r \cdot A_i(p) \subseteq A_{i+1}(p)$, we conclude that the ball
$A_j(q)$ is not maximal. Contradiction.

\end{proof}

\begin{cor}
\label{c.evccor}
Suppose that in addition to all the assumptions of Theorem \ref{t.evc}
we have
\[
\cntm{S_n} \leq (c+1) \cntm{S_1},
\]
where $c$ is the constant defined in Theorem \ref{t.evc}.
Then there is a disjoint subcollection $\calC$ of maximal balls such the
union of $\left( 1+ \frac 4 {r-2}\right)$-enlargements of balls in
$\calC$ covers at least $\left( 1-(c+1) \left( \frac{c-1}{c}\right)^n
\right)$ of $S_1$.
\end{cor}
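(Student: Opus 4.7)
The plan is to derive the corollary as a direct consequence of part (a) of Theorem~\ref{t.evc}, exploiting the hypothesis $\cntm{S_n} \leq (c+1) \cntm{S_1}$ to translate a relative bound on the uncovered portion of $S_n$ into a relative bound on the uncovered portion of $S_1$.

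Concretely, I would begin by applying Theorem~\ref{t.evc} to obtain a disjoint subcollection $\calC$ of maximal balls from $\{A_i(p)\}_{p \in X,\, i=1,\dots,n}$ such that, writing $s := 1 + \frac{4}{r-2}$ and
\[
U := \bigcup_{B \in \calC} (s \cdot B),
\]
the set $U \cup (S_n \setminus S_1)$ covers all of $S_n$ except for a subset of size at most $\left(\frac{c-1}{c}\right)^n \cntm{S_n}$.

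Next I would use the inclusion $S_1 \subseteq S_n$ to identify the uncovered portion: since $S_n \setminus \left(U \cup (S_n \setminus S_1)\right) = S_1 \setminus U$, the assertion of part (a) rewrites as
\[
\cntm{S_1 \setminus U} \leq \left(\frac{c-1}{c}\right)^n \cntm{S_n}.
\]
Here the additional hypothesis intervenes: substituting $\cntm{S_n} \leq (c+1)\cntm{S_1}$ yields
\[
\cntm{S_1 \setminus U} \leq (c+1) \left(\frac{c-1}{c}\right)^n \cntm{S_1},
\]
and hence $\cntm{U \cap S_1} \geq \left(1 - (c+1)\left(\frac{c-1}{c}\right)^n\right) \cntm{S_1}$, which is the claim.

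There is essentially no obstacle here beyond the bookkeeping identity $S_n \setminus \left(U \cup (S_n \setminus S_1)\right) = S_1 \setminus U$; all the combinatorial work has already been done inside Theorem~\ref{t.evc}. The role of the corollary is simply to isolate the case where $S_n$ is only a bounded multiple of $S_1$, so that covering information about $S_n$ automatically yields covering information about $S_1$.
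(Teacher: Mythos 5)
Your argument is correct and is essentially the paper's own proof, just written out in more detail: the paper also derives the corollary directly from assertion (a) of Theorem \ref{t.evc} (with the balls taken maximal, as Remark \ref{r.maxball} permits), using the identity $S_n \setminus \bigl(U \cup (S_n \setminus S_1)\bigr) = S_1 \setminus U$ and the hypothesis $\cntm{S_n} \leq (c+1)\cntm{S_1}$ to convert the bound on the uncovered part of $S_n$ into one on $S_1$.
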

\begin{proof}
From the proof of Theorem \ref{t.evc} it follows that one can find a
disjoint collection $\calC$ of maximal balls satisfying assertions
\textrm{(a)} and \textrm{(b)} of the theorem. The statement of the
corollary is an easy consequence of \textrm{(a)}. 
\end{proof}

As the main application we will use the corollary above in the proof
of Theorem \ref{t.expdec}. It will be essential to know that one can
ensure that the extra $\left( 1+\frac 4 {r-2}\right)$-enlargement does
change the size of the union of the balls too much. 

\begin{lemma}
\label{l.smallenl}
Let $\Gamma$ be a group of polynomial growth and $\delta \in (0,1)$ be
some constant. Then there exist integers $n_0, r_0 > 2$, depending
only on $\Gamma$ and $\delta$, such that the following assertion
holds.

If $\calC$ is a finite collection of disjoint balls with radii
greater than $n_0$, then for all $r \geq r_0$ we have
\[
\cntm{\bigsqcup\limits_{W \in \calC} W} \geq
(1-\delta) \cntm{\bigcup\limits_{W\in \calC} \left( 1+\frac 4 {r-2} \right) \cdot W}.
\] 
\end{lemma}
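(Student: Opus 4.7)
The plan is to reduce the inequality to the previously established boundary estimate Lemma \ref{l.smallbdr} by means of a simple algebraic observation. The key identity I would use is
\[
\left(1 - \frac{5}{r}\right)\left(1 + \frac{4}{r-2}\right) = 1 - \frac{r+10}{r(r-2)} < 1 \quad \text{for every } r>2,
\]
which shows that for any ball $W = \bl(y, s)$ in $\Gamma$, the $r$-interior of the enlargement $W' := \left(1+\frac{4}{r-2}\right)\cdot W = \bl\!\left(y,\, s\left(1+\frac{4}{r-2}\right)\right)$ sits inside $W$ itself:
\[
\intr{r}(W') = \bl\!\left(y,\, \left(1-\tfrac{5}{r}\right)\left(1+\tfrac{4}{r-2}\right) s\right) \subseteq \bl(y, s) = W.
\]

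First I would apply Lemma \ref{l.smallbdr} with the given $\delta$ to produce constants $n_0, r_0$, depending only on $\Gamma$ and $\delta$, such that any finite disjoint collection of balls with radii greater than $n_0$ has its $r$-interior occupying at least a $(1-\delta)$-fraction of the total mass for all $r > r_0$. I would then declare these to be the constants in our lemma, enlarging $r_0$ if necessary to guarantee $r_0 > 2$. Next, for every $W = \bl(y, s) \in \calC$ with $s > n_0$ and every $r \geq r_0$, the enlarged ball $W'$ has radius $s(1+4/(r-2)) > s > n_0$, so Lemma \ref{l.smallbdr} applies to the singleton collection $\{W'\}$ and, combined with the inclusion above, yields
\[
\cntm{W} \geq \cntm{\intr{r}(W')} \geq (1-\delta) \cntm{W'}.
\]

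Finally, I would sum over $W \in \calC$, exploiting the disjointness of $\calC$ on the left-hand side while using subadditivity of cardinality on the right:
\[
\cntm{\bigsqcup_{W \in \calC} W} = \sum_{W \in \calC} \cntm{W} \geq (1-\delta) \sum_{W \in \calC} \cntm{W'} \geq (1-\delta)\, \cntm{\bigcup_{W \in \calC} W'},
\]
which is the desired inequality. No step here is genuinely difficult: the only thing to check carefully is the elementary algebraic identity $(1-5/r)(1+4/(r-2)) < 1$, after which the estimate is a direct reduction to the already-established $r$-boundary estimate.
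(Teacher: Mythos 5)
Your proof is correct. The paper itself offers no written argument for Lemma \ref{l.smallenl}, only the remark that it follows from Pansu's asymptotic \eqref{eq.pansu}; the intended route is presumably a direct comparison of $\cntm{\left(1+\frac{4}{r-2}\right)\cdot W}=\gamma\!\left(s\left(1+\frac{4}{r-2}\right)\right)\approx c_{\Gamma}s^d\left(1+\frac{4}{r-2}\right)^d$ with $\cntm{W}=\gamma(s)\approx c_{\Gamma}s^d$, followed by summation over $\calC$ and subadditivity. You instead reduce to Lemma \ref{l.smallbdr} via the algebraic observation $(1-5/r)\left(1+\frac{4}{r-2}\right)=1-\frac{r+10}{r(r-2)}<1$, which places $\intr{r}$ of the enlarged ball inside the original ball; this identity checks out, and applying Lemma \ref{l.smallbdr} to the singleton collections $\{W'\}$ correctly sidesteps the fact that the enlargements need not be pairwise disjoint (only the original balls are). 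The final summation, using disjointness on the left and subadditivity on the right, is exactly right. What your route buys is that no fresh appeal to Pansu is needed and the constants $n_0,r_0$ are inherited verbatim from the boundary lemma; what it costs is a dependency on Lemma \ref{l.smallbdr}, which the paper also leaves unproven, but that is a legitimate dependency since that lemma precedes this one. Two minor points: Lemma \ref{l.smallbdr} is stated for $r>r_0$ whereas the present lemma asserts the bound for all $r\geq r_0$, so one should pass to $r_0+1$ (your remark about enlarging $r_0$ covers this); and the inequality you obtain is in fact strict, which is harmless.
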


\noindent The proof of the lemma follows from the result of Pansu (see Equation \eqref{eq.pansu}).

\section{Fluctuations of Averages of Nonnegative Functions}
\label{s.upcrineq}

The purpose of this section is to prove the following theorem.
\begin{thm}
\label{t.expdec}
Let $\Gamma$ be a group of polynomial growth of degree $d \in \Zp$ and
let $(\alpha, \beta) \subset \Rps$ be some nonempty interval. Then
there are some constants $c_1,c_2 \in \Rps$ with $c_2<1$, which depend
only on $\Gamma$, $\alpha$
and $\beta$, such that the following assertion holds. 

For any probability space $\prX=(X, \calB, \mu)$, any
measure-preserving action of $\Gamma$ on $\prX$ and any measurable $f
\geq 0$ on $X$ we have
\[
\mu(\{ x: (\avg{g \in \bl(k)} f(g \cdot x))_{k \geq 1} \in \calF_{(\alpha,\beta)}^N  \})
< c_1 c_2^N
\]
for all $N \geq 1$.
\end{thm}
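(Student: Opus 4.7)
The strategy is to combine the transference principle with the effective Vitali covering theorem and a telescoping application of the ball-growth inequality. Writing $f_x(h) := f(h \cdot x)$, Lemma~\ref{l.caldtrans} (applied to each truncation of the fluctuation event) reduces the claim to the following combinatorial statement: for any nonnegative function $\phi$ on $\Gamma$, any $m \geq 1$, and every sufficiently large $L$, the density in $\bl(L)$ of points $p \in \bl(L-m)$ such that the sequence $(\avg{g \in \bl(p,k)} \phi(g))_{k=1}^{m}$ has at least $N$ fluctuations across $(\alpha,\beta)$ is bounded by $c_1 c_2^N$, with constants depending only on $\Gamma, \alpha, \beta$.

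Fix $r > 2$ and a positive integer $K$, both depending only on $\Gamma, \alpha, \beta$, and set $n := \lfloor N/K \rfloor$. Denote by $X$ the set of fluctuating points. For each $p \in X$, I would use Corollary~\ref{c.skip} to extract concentric balls
\[
A_1(p) \subsetneq D_1(p) \subsetneq A_2(p) \subsetneq \cdots \subsetneq A_n(p) \subsetneq D_n(p) \subseteq \bl(L)
\]
such that each $A_i(p)$ has $\phi$-average greater than $\beta$, each $D_i(p)$ has $\phi$-average less than $\alpha$, and the $r$-enlargement of $A_i(p)$ is contained in $A_{i+1}(p)$. Applying Lemma~\ref{l.ballgrowth} to each concentric pair yields $|A_i(p)| < (\alpha/\beta)|D_i(p)|$; coupled with $D_i(p) \subseteq A_{i+1}(p)$, telescoping produces the per-point estimate $|A_1(p)| < (\alpha/\beta)^{n-1}|A_n(p)|$.

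Then I would apply Theorem~\ref{t.evc} to $\{A_i(p)\}_{p \in X,\, i=1,\ldots,n}$, obtaining a disjoint collection $\calC$ of maximal up-balls whose $(1 + \tfrac{4}{r-2})$-enlargements, together with $S_n \setminus S_1$, cover $S_n$ up to a fraction $((c-1)/c)^n$, where $c = 3^d C^2$. Lemma~\ref{l.smallenl} ensures these enlargements change total disjoint volume by only a small factor. Combining the $\calC$-coverage with the mass estimates (average exceeding $\beta$ on each $B \in \calC$ and less than $\alpha$ on the enclosing concentric down ball $D_i(p_B)$) converts the per-point exponential decay into a global density bound $|X|/|\bl(L)| \leq c_1 c_2^N$ with $c_2 := ((c-1)/c)^{1/K} < 1$, which together with the transference step yields the theorem.

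The main obstacle is this final combining step. Specifically, one must carefully interleave the disjoint multi-scale Vitali selection $\calC$ with the per-point telescoping, so that the overhead constants $s^d C$ from the $(1 + \tfrac{4}{r-2})$-enlargement do not accumulate across the $n \sim N/K$ levels and erode the exponential factor $((c-1)/c)^n$. The balancing of $r$ and $K$ against the polynomial growth constants of $\Gamma$, and ensuring that each intermediate down ball $D_i(p)$ has radius comparable (up to a bounded factor) to the corresponding up ball $A_i(p)$ so that the mass upper bound on $\int_{D_i(p_B)} \phi$ can be turned into a bound on $\int_{sB} \phi$, is where the bulk of the technical work lies.
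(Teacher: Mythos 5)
Your overall architecture differs from the paper's in a crucial way, and the difference is exactly where the gap lies. You apply Theorem \ref{t.evc} \emph{once} with $n\sim N/K$ levels and hope that the exponential factor $\left(\frac{c-1}{c}\right)^n$ in its conclusion becomes the decay rate $c_2^N$. But both conclusions of Theorem \ref{t.evc} are \emph{covering} statements: they say that the enlargements of the selected disjoint collection $\calC$ cover most of $S_n$, resp.\ have measure at least $(1-(\frac{c-1}{c})^n)\cntm{S_1}$. Since $X\subseteq S_1$, being told that $S_1$ is mostly covered by $\calC$ gives no upper bound whatsoever on $\cntm{S_1}$ or $\cntm{X}$. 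Likewise your per-point telescoping $\cntm{A_1(p)}<(\alpha/\beta)^{n-1}\cntm{A_n(p)}$ only says each individual first-level ball is tiny relative to $\bl(L)$; there is no control on the number of points $p$, so it does not aggregate to a density bound on $X$. The aggregation you would need — summing $\cntm{B}<(\alpha/\beta)\cntm{D_i(p_B)}$ over $B\in\calC$ — fails because the down-balls $D_i(p_B)$ for distinct $B$ are not disjoint and have no bounded-overlap property; worse, the construction of $\calC$ in Theorem \ref{t.evc} deliberately selects lower-level balls \emph{disjoint from} the higher-level selections, so the nested up/down structure your telescoping relies on is destroyed by the very selection procedure.

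The paper's proof gets the exponential decay from a different mechanism: it keeps $n$ a \emph{fixed} universal constant (chosen only so that $(c+1)(1-\frac1c)^n$ is smaller than a fixed $\delta$-dependent quantity) and instead applies the effective Vitali theorem $\sim N/T$ times in alternation. At each ``odd'' step it covers $(1-\delta)$ of the current set by disjoint balls with average $>\beta$; at each ``even'' step it covers by disjoint balls with average $<\alpha$, and Lemma \ref{l.ballgrowth} — made applicable by the containment statement of Lemma \ref{l.bdrint} together with the boundary estimates of Lemmas \ref{l.smallbdr} and \ref{l.smallenl} — forces $\cntm{B_{2i}}\geq\frac{\beta}{\alpha}(1-\delta)\cntm{B_{2i-1}}$. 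Iterating, the measure of the union grows geometrically inside $\bl(L)$, which is what bounds $\cntm{B_0}$ by $c_1c_2^N$ with $c_2=(\frac{\alpha}{\beta}(1-\delta)^{-2})^{1/2T}$. To repair your argument you would essentially have to reorganize it into this iterated odd/even scheme; the single-pass version cannot be closed.
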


To simplify the presentation we use the adjective \textbf{universal} to talk about
constants determined by $\Gamma$ and $(\alpha,\beta)$. When a constant
$c$ is determined by $\Gamma, (\alpha, \beta)$ and a parameter
$\delta$, we say that $c$ is \textbf{$\delta$-universal}. Prior to proceeding to the proof of Theorem \ref{t.expdec}, we make
some straightforward observations.

\begin{rem}
\label{r.fluctbd} It easy to see how one can generalize the theorem
above for arbitrary functions bounded from below. If a measurable function $f$ on $X$ is greater
than $-m$ for some constant $m \in \Rp$, then
\[
\mu(\{ x: (\avg{ g \in \bl(k)} f(g \cdot x))_{k \geq 1} \in \calF_{(\alpha,\beta)}^N  \})
< \widetilde{c}_1 \widetilde{c}_2^N,
\]
where the constants $\widetilde{c}_1, \widetilde{c}_2$ are given by applying Theorem \ref{t.expdec}
to the function $f+m$ and the interval $(\alpha+m,\beta+m)$. 
\end{rem}

\begin{rem}
\label{r.abclose}
Recall that $\gamma: \Zp \to \Zp$ is a growth function of a group
$\Gamma$. Let $C \geq 1$ be a constant such that
\[
\frac 1 C r^d \leq \gamma(r) \leq C r^d \quad \text{ for all } r\in \N
\]
and let $c:=3^d C^2$. Then it suffices to prove Theorem \ref{t.expdec}
only for intervals $(\alpha,\beta)$ such that
\[
\frac{\beta}{\alpha} \leq \frac{c+1}{c}.
\]
If the interval does not satisfy this condition, we replace it with a
sufficiently small subinterval and apply Theorem \ref{t.evc}. The
importance of this observation will be apparent later.
\end{rem}

\begin{rem}
\label{r.largen}
Instead of proving the original assertion of Theorem \ref{t.expdec},
we will prove the following weaker assertion, which is clearly
sufficient to deduce Theorem \ref{t.expdec}.

\emph{There is a universal integer $\widetilde N_0 \in \N$ such that for any probability space $\prX=(X, \calB, \mu)$, any
measure-preserving action of $\Gamma$ on $\prX$ and any measurable $f
\geq 0$ on $\prX$ we have
\[
\mu(\{ x: (\avg{g \in \bl(k)} f(g \cdot x))_{k \geq 1} \in \calF_{(\alpha,\beta)}^N  \})
< c_1 c_2^N
\]
for all $N \geq \widetilde N_0$.
}\end{rem}

The upcrossing inequalities given by Theorem \ref{t.expdec} and Remark
\ref{r.fluctbd} allow for a short proof of the pointwise ergodic
theorem on $\Ell{\infty}$ for actions of groups of polynomial growth.
\begin{thm}
\label{t.polergthm}
Let $\Gamma$ be a group of polynomial growth acting on a probability
space $\prX=(X, \calB, \mu)$ by measure-preserving
transformations. Then for every $f \in \Ell{\infty}(\prX)$ the limit
\[
\lim\limits_{n \to \infty} \avg{g \in \bl(n)} f(g \cdot x) 
\]
exists almost everywhere.
\end{thm}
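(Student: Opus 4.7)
The plan is to apply the classical upcrossings-to-convergence reduction. Set $A_n f(x) := \avg{g \in \bl(n)} f(g \cdot x)$. Since $|A_n f(x)| \leq \|f\|_{\infty}$ for $\mu$-a.e.\ $x$, the sequence $(A_n f(x))_{n \geq 1}$ stays inside a bounded interval almost everywhere, so its set of points of non-convergence can be written as
\[
\{ x : \lim_{n \to \infty} A_n f(x) \text{ does not exist}\} = \bigcup_{\substack{\alpha, \beta \in \Q \\ \alpha < \beta}} E_{\alpha,\beta},
\]
where $E_{\alpha,\beta} := \{ x : \liminf_n A_n f(x) < \alpha < \beta < \limsup_n A_n f(x)\}$. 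By countable subadditivity, it suffices to show that $\mu(E_{\alpha,\beta}) = 0$ for every pair of rationals $\alpha < \beta$.

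Fix such a pair. The key observation is that $x \in E_{\alpha,\beta}$ forces infinitely many averages $A_n f(x)$ to exceed $\beta$ and infinitely many to fall below $\alpha$, so one can extract indices alternately hitting the two regions of any prescribed length. Hence $(A_n f(x))_{n \geq 1} \in \calF_{(\alpha,\beta)}^N$ for every $N \geq 1$, giving
\[
E_{\alpha,\beta} \subseteq \bigcap_{N \geq 1} \{ x : (A_n f(x))_{n \geq 1} \in \calF_{(\alpha,\beta)}^N \}.
\]

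The final step is to invoke the extension of Theorem \ref{t.expdec} to functions bounded from below provided by Remark \ref{r.fluctbd}. Since $f \in \Ell{\infty}(\prX)$, one has $f > -m$ almost everywhere for any $m \in \Rp$ with $m > \|f\|_{\infty}$, and by enlarging $m$ if necessary we may also arrange $\alpha + m > 0$, so that the shifted interval $(\alpha + m, \beta + m)$ lies inside $\Rps$. Remark \ref{r.fluctbd} then produces constants $\widetilde{c}_1 > 0$ and $\widetilde{c}_2 \in (0,1)$ such that
\[
\mu\bigl( \{ x : (A_n f(x))_{n \geq 1} \in \calF_{(\alpha,\beta)}^N \}\bigr) < \widetilde{c}_1 \widetilde{c}_2^N
\]
for every $N \geq 1$. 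Letting $N \to \infty$ yields $\mu(E_{\alpha,\beta}) = 0$, which completes the argument.

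There is no serious obstacle here; all the analytic heavy lifting has already been done in Theorem \ref{t.expdec} and Remark \ref{r.fluctbd}. The only points requiring a little care are the decomposition of the divergence set via rational pairs (which relies on the uniform bound $|A_n f| \leq \|f\|_{\infty}$ valid for $f \in \Ell{\infty}$) and the shift by $m$ needed to place the working interval inside $\Rps$ before the remark applies.
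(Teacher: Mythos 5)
Your proof is correct and follows essentially the same route as the paper: decompose the divergence set into a countable union over intervals, observe that each piece consists of points with infinitely many fluctuations across the corresponding interval, and kill each piece using Theorem \ref{t.expdec} via Remark \ref{r.fluctbd}. Your treatment is in fact slightly more explicit than the paper's about the shift by $m$ needed to place the interval inside $\Rps$ before the remark applies.
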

\begin{proof}
Let 
\[
X_0:= \{x \in X: \lim\limits_{n \to \infty} \avg{g \in \bl(n)} f(g
\cdot x) \text{ does not exist} \}
\]
be the set of the points in $\prX$ where the ergodic averages do not
converge. Let $((\alpha_i,\beta_i))_{i \geq 1}$ be a sequence of
nonempty intervals such
that each nonempty interval $(c,d) \subset \R$ contains some interval
$(a_i,b_i)$. Then it is clear that if $x \in X_0$, then there is some
interval $(a_i,b_i)$ such that the sequence of averages $\left( \avg{g \in
  \bl(n)} f(g \cdot x) \right)_{n \geq
  1}$ fluctuates over $(a_i,b_i)$ infinitely often, i.e.,
\[
X_0 \subseteq \{ x \in X: \left(\avg{g \in \bl(n)} f(g \cdot x)\right)_{n \geq
  1} \in \bigcup\limits_{i \geq 1}
\bigcap\limits_{k \geq 1} \calF_{(a_i,b_i)}^k\}.
\]
By Theorem \ref{t.expdec} and Remark
\ref{r.fluctbd} we have for every interval $(a_i,b_i)$ that
\[
\mu(\{ x \in X: \left(\avg{g \in \bl(n)} f(g \cdot x)\right)_{n \geq
  1} \in \bigcap\limits_{k \geq 1} \calF_{(a_i,b_i)}^k\}) = 0,
\]
hence $\mu(X_0) = 0$ and the proof is complete.
\end{proof}

We now begin the proof of Theorem \ref{t.expdec}, namely we will prove
the assertion in Remark \ref{r.largen}. Assume from
now on that the group $\Gamma$ of polynomial growth of degree $d \in
\Zp$ and the interval $(\alpha, \beta) \subset \Rps$ are
\emph{fixed}. 

Given a measure-preserving action of $\Gamma$ on a probability space
$\prX =(X,\calB,\mu)$, let
\[
E_N:=\{ x: \left( \avg{g \in \bl(k)} f(g \cdot x) \right)_{k \geq 1} \in
\calF_{(\alpha, \beta)}^N\}
\]
be the set of all points $x \in X$ where the ergodic averages
fluctuate at least $N \geq \widetilde N_0$
times across the interval $(\alpha,\beta)$. Here $\widetilde N_0$ is a universal
constant, which will be determined later. For $m \geq 1$ define,
furthermore, the set
\[
E_{N,m}:=\{ x: \left( \avg{g \in \bl(k)} f(g \cdot x) \right)_{k=1}^m \in
\calF_{(\alpha, \beta)}^N\}
\]
of all points such that the finite sequence $\left(\avg{\bl(k)} f(g
  \cdot x) \right)_{k=1}^m$ fluctuates at least $N$ times across
$(\alpha,\beta)$. Then, clearly, $(E_{N,m})_{m \geq 1}$ is a monotone
increasing sequence of sets and
\[
E_N = \bigcup\limits_{m \geq N} E_{N,m}.
\]
We will complete the proof by giving a universal estimate for
$\mu(E_{N,m})$ for all $m \geq N$. For that we use the
transference principle (Lemma \ref{l.caldtrans}), i.e., for an integer
$L > m$ and a point
$x \in X$ we let
\[
B_{L,m,x}:=\{g: \ g \cdot x \in E_{N,m} \text{  and  } \| g \| \leq L-m \}.
\]
The goal is to show that the density of the set
\[
B_0:=B_{L,m,x} \subset \bl(L)
\]
can be estimated by $c_1 c_2^N$ for some
universal constants $c_1,c_2$. The main idea is as follows. For every point $z
\in B_0$ the sequence of averages
\[
k \mapsto \avg{g \in \bl(k)} f( (gz) \cdot x), \quad k = 1,\dots,m
\]
fluctuates at least $N$ times. Since the word metric $d=d_R$ on $\Gamma$ is
right-invariant, the set $\bl(k)z$ is in fact a ball of radius
$k$ centered at $z$ for each $k=1,\dots,m$. Given a parameter $\delta
\in (0, 1-\sqrt{{\alpha}/{\beta}})$, we will pick some of these balls
and apply effective Vitali covering theorem
(Theorem \ref{t.evc}) multiple times to replace $B_0$ by a sequence 
\[
B_1,B_2,\dots, B_{\lfloor (N-N_0)/T \rfloor}
\]
of subsets of $\bl(L)$ for some $\delta$-universal integers $T, N_0 \in \N$ which satisfies the
assumption
\begin{equation}
\label{eq.oddstep}
B_{2i+1} \text{ covers at least } \left( 1-\delta \right)-\text{fraction of } B_{2i} \quad \text{ for all
  indices } i \geq 0
\end{equation}
at `odd' steps and the assumption
\begin{equation}
\label{eq.evenstep}
\cntm{B_{2i}} \geq \frac{\beta}{\alpha}(1-\delta) \cntm{B_{2i-1}}
\quad \text{ for all indexes } i \geq 1
\end{equation}
at `even' steps. Each $B_i$ is, furthermore, a union 
\[
\bigsqcup\limits_{B \in \calC_i} B
\]
of some family $\calC_i$ of disjoint balls with centers in $B_0$. If
such a sequence of sets $B_1,\dots,B_{\lfloor (N-N_0)/T
  \rfloor}$ exists, then
\begin{align*}
\cntm{\bl(L)} \geq \cntm{B_{\lfloor (N-N_0)/T \rfloor}} \geq \left( \frac{\beta}{\alpha}(1-\delta)^2
  \right)^{\lfloor \frac {N-N_0}{2 T} \rfloor } \cntm{B_0},
\end{align*}
which gives the required exponential bound on the density of $B_0$
with
\[
c_2:=\left( \frac{\alpha}{\beta}(1-\delta)^{-2}
  \right)^{ 1 / 2T}
\]
and a suitable $\delta$-universal $c_1$. To
ensure that conditions \eqref{eq.oddstep} and \eqref{eq.evenstep}
hold, one has to pick sufficiently large $\delta$-universal parameters
$r$ and $n$ for the effective Vitali covering theorem. We make it precise at
the end of the proof, for now we assume that $r$, $n$ are `large enough'. 

In order to force the
sufficient growth rate of the balls (condition \textrm{(b)} of Theorem
\ref{t.evc}), we employ the following
argument. Let $K>0$ be the smallest integer such that
\[
\left(1-\frac{1-(\alpha/\beta)^{1/d}}{2} \right)^{\lceil \frac K 2 \rceil} \left(
  \frac{\beta}{\alpha} \right)^{{\lceil \frac K 2 \rceil} \cdot \frac
  1 d} \geq r.
\]
Then, applying Corollary \ref{c.skip}, we obtain a universal integer
$n_0 \in \N$ such that if a sequence
\[
(\avg{g \in B(i)} f((gz) \cdot x))_{i=n}^m \quad \text{ for some } n>n_0, z \in B_0
\]
fluctuates at least $K$ times across the interval $(\alpha,\beta)$, then 
\begin{equation}
\label{eq.evccondb}
\frac m n > \left(1-\frac{1-(\alpha/\beta)^{1/d}}{2}\right)^{\lceil \frac K 2 \rceil} \left(
  \frac{\beta}{\alpha} \right)^{{\lceil \frac K 2 \rceil} \cdot \frac
  1 d} \geq r.
\end{equation}
Let $n$ be large enough for use in effective Vitali covering
theorem. We define $T:=2nK$ and let $N_0 \geq n_0$ be sufficiently large (this will be made precise later). The first $N_0$
fluctuations are skipped to ensure that the balls have large enough radius, and
the rest are divided into $\lfloor (N-N_0) / T \rfloor$ groups of $T$
consecutive fluctuations. The $i$-th group of consecutive
fluctuations is used to construct the set $B_i$ for $i=1,\dots,\lfloor
(N-N_0)/T \rfloor$ as follows. We distinguish between the `odd' and the
`even' steps.

\noindent
\textbf{Odd step:} First, let us describe the procedure for odd
$i$'s. For each point $z \in B_{i-1}$ we do the following.
By induction we assume that $z \in B_{i-1}$ belongs to some
unique ball $\bl(u,s)$ from $(i-1)$-th step
with $u \in B_0$. If $i=1$, then $z \in B_0$. Let $A_1(z)$ be the $(K+1)$-th ball $\bl(u,s_1)$ in the
$i$-th group of fluctuations such that
\[
\avg{g \in A_1(z)} f(g \cdot x) > \beta,
\]
$A_2(z)$ be the $(2K+1)$-th ball $\bl(u,s_2)$ in the $i$-th group of
fluctuations such that
\[
\avg{g \in A_2(z)} f(g \cdot x) > \beta
\]
and so on up to $A_n(z)$. It is clear that the $r$-enlargement of
$A_j(z)$ is contained in $A_{j+1}(z)$ for all indexes $j<n$ and that
the balls defined in this manner are contained in $\bl(L)$. Thus the
assumptions of Theorem \ref{t.evc} are satisfied. There are two
further possibilities: either this collection satisfies the
additional assumption in Corollary \ref{c.evccor}, i.e., 
\begin{equation}
\label{eq.corcond}
\cntm{S_n} \leq (c+1) \cntm{S_1}
\end{equation}
or not. If \eqref{eq.corcond} holds, then by the virtue of
Corollary \ref{c.evccor} we obtain a disjoint
collection $\calC$ of maximal balls such that the measure of the union
of $\left( 1+\frac 4 {r-2} \right)$-enlargements of balls in $\calC$
covers at least $\left( 1 - (c+1) \left(\frac{c-1}{c} \right)^n \right)$
of $S_1$. We let
\[
B_{i}:=\bigsqcup\limits_{B \in \calC} B
\]
and $\calC_i:=\calC$. Condition \eqref{eq.oddstep} is satisfied if $r$
and $n$ are large enough, and we proceed to the
following `even' step. If, on the
contrary,
\[
\cntm{S_n} > (c+1) \cntm{S_1},
\]
then we apply the standard Vitali covering lemma to the collection of
maximal $n$-th level balls and obtain a disjoint subcollection $\calC$
such that
\begin{equation}
\label{eq.10cincr}
\cntm{\bigsqcup\limits_{B \in \calC} B} \geq \frac{1}{c}\cntm{S_n} > \frac{c+1}{c}\cntm{S_1}
\end{equation}
We assume without loss of generality that $\frac{\beta}{\alpha} \leq
\frac{c+1}{c}$ (see Remark \ref{r.abclose}). We let
\begin{align*}
B_i&:=B_{i-1}, \\
B_{i+1}&:=\bigsqcup\limits_{B \in \calC} B
\end{align*}
and
\begin{align*}
\calC_i&:=\calC_{i-1}, \\
\calC_{i+1}&:=\calC.
\end{align*}
The conditions \eqref{eq.oddstep}, \eqref{eq.evenstep} are satisfied
and we proceed to the next `odd' step.

\noindent
\textbf{Even step:} We now describe the procedure for even
$i$'s. For each point $z \in B_{i-1}$ we do the following.
By induction we assume that $z \in B_{i-1}$ belongs to some
unique ball $\bl(u,s)$ from $(i-1)$-th step
with $u \in B_0$. Let $A_1(z)$ be the $(K+1)$-th ball $\bl(u,s_1)$ in the
$i$-th group of fluctuations such that
\[
\avg{g \in A_1(z)} f(g \cdot x) < \alpha,
\]
$A_2(z)$ be the $(2K+1)$-th ball $\bl(u,s_2)$ in the $i$-th group of
fluctuations such that
\[
\avg{g \in A_2(z)} f(g \cdot x) < \alpha
\]
and so on up to $A_n(z)$. It is clear that the $r$-enlargement of
$A_j(z)$ is contained in $A_{j+1}(z)$ for all indexes $j<n$ and that
the balls defined in this manner are contained in $\bl(L)$. Thus the
assumptions of Theorem \ref{t.evc} are satisfied. There are two
further possibilities: either this collection satisfies the
additional assumption in Corollary \ref{c.evccor}, i.e., 
\begin{equation}
\label{eq.corcond1}
\cntm{S_n} \leq (c+1) \cntm{S_1}
\end{equation}
or not. If 
\[
\cntm{S_n} > (c+1) \cntm{S_1},
\]
then we apply the standard Vitali covering lemma to the collection of
maximal $n$-th level balls and obtain a disjoint subcollection $\calC$
such that
\begin{equation}
\label{eq.10cincr1}
\cntm{\bigsqcup\limits_{B \in \calC} B} \geq \frac{1}{c}\cntm{S_n} > \frac{c+1}{c}\cntm{S_1}
\end{equation}
We assume without loss of generality that $\frac{\beta}{\alpha} \leq
\frac{c+1}{c}$ (see Remark \ref{r.abclose}). We let
\[
B_i:=\bigsqcup\limits_{B \in \calC} B
\]
and proceed to the following `odd' step. If \eqref{eq.corcond1} holds, then by the virtue of
Corollary \ref{c.evccor} we obtain a disjoint
collection $\calC$ of maximal balls such that the measure of the union
of $\left( 1+\frac 4 {r-2} \right)$-enlargements of balls in $\calC$
covers at least $\left( 1 - (c+1) \left(\frac{c-1}{c} \right)^n \right)$
of $S_1$. We let
\[
B_{i}:=\bigsqcup\limits_{B \in \calC} B
\]
and $\calC_i:=\calC$. The goal is to prove that condition \eqref{eq.evenstep} is
satisfied. If the balls from $\calC_{i-1}$ were completely contained in
the balls from $\calC_i$, the proof would be completed by applying
Lemma \ref{l.ballgrowth}. This, in general, might not be the case, so
we argue as follows. First, we prove the following lemma.

\begin{lemma}
\label{l.bdrint}
If a ball $W_1$ from $\calC_{i-1}$ intersects $\intr{r}(W_2)$
for some ball $W_2 \in \calC_i$, then $W_1 \subseteq W_2$.
\end{lemma}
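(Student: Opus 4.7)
The plan is to reduce Lemma~\ref{l.bdrint} to the quantitative radius comparison $s_1 \le \tfrac{5}{2r}\, s_2$, where I write $W_1 = \bl(u_1, s_1) \in \calC_{i-1}$ and $W_2 = \bl(u_2, s_2) \in \calC_i$. Once this bound is in hand, the lemma follows by a triangle inequality: for any $w \in W_1 \cap \intr{r}(W_2)$, the estimates $d(u_1, w) \le s_1$ and $d(u_2, w) \le (1 - 5/r) s_2$ give $d(u_1, u_2) \le s_1 + (1 - 5/r) s_2$, so for every $w' \in W_1$
\[
d(w', u_2) \;\le\; 2 s_1 + (1 - 5/r) s_2 \;\le\; s_2,
\]
placing $w' \in W_2$ and proving $W_1 \subseteq W_2$.

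To obtain the radius bound I would promote $W_1$ to a candidate ball of the step-$i$ Vitali application that is concentric with $W_1$. Since $u_1 \in W_1 \subseteq B_{i-1}$ and $W_1$ is the unique member of $\calC_{i-1}$ containing $u_1$, the construction applied to $z = u_1$ produces a ball $A_1(u_1) = \bl(u_1, s')$. The radius $s_1$ is chosen from $u_1$'s $(i-1)$-st fluctuation group whereas $s'$ lies in its $i$-th group, so at least $K$ fluctuations of $u_1$'s own averaging sequence separate them, and the defining property of $K$ together with Corollary~\ref{c.skip} forces $s' \ge r\, s_1$. Any $w \in W_1 \cap \intr{r}(W_2)$ then lies in $A_1(u_1) \cap W_2$, so this intersection is nonempty, and by Remark~\ref{r.maxball} I may assume that $W_2$ is maximal at its level $j^* \in \{1,\dots,n\}$ in the step-$i$ Vitali problem. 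When $j^* \ge 2$, Claim~3 from the proof of Theorem~\ref{t.evc} applies verbatim and yields $A_1(u_1) \subseteq \bigl(1 + \tfrac{4}{r-2}\bigr) W_2$, giving $s' \le \bigl(1 + \tfrac{4}{r-2}\bigr) s_2$; combined with $s' \ge r s_1$ this forces $s_1 \le \tfrac{r+2}{r(r-2)}\, s_2$, which is at most $\tfrac{5}{2r} s_2$ as soon as $r \ge 5$ (and $r$ is chosen $\delta$-universally large). When $j^* = 1$, maximality only delivers $A_1(u_1) \not\supsetneq W_2$; either $A_1(u_1) \subseteq W_2$, whence $W_1 \subseteq W_2$ is immediate, or the two balls overlap without containment, in which case $|s' - s_2| < d(u_1, u_2) \le s_1 + (1 - 5/r) s_2$ combined with $s' \ge r s_1$ again produces $s_1 \le \tfrac{5}{2r} s_2$ after a short case distinction on the sign of $s' - s_2$.

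The main obstacle is precisely this radius comparison: a priori, $s_1$ and $s_2$ arise from the fluctuation sequences of two unrelated centers $u_1 \ne u_2$, so there is no direct way to relate them. The trick is to route the argument through the concentric candidate $A_1(u_1)$, which inhabits the same Vitali problem as $W_2$ and is therefore controlled by the Claim~3 machinery at step $i$, converting the intrinsic factor $r$ along $u_1$'s own sequence into a genuine bound on $s_1$ in terms of $s_2$.
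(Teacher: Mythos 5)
Your proof is correct in outline but takes a genuinely different --- and considerably more roundabout --- route than the paper's. The paper argues by contradiction entirely inside the triangle inequality: if $W_1\cap\intr{r}(W_2)\neq\varnothing$ then $d(u_1,u_2)\le s_1+(1-5/r)s_2$, while $W_1\not\subseteq W_2$ forces $d(u_1,u_2)>s_2-s_1$; combining gives $s_2<\frac{2rs_1}{5}$, whence $W_2$ is contained in the $r$-enlargement of $W_1$, which in turn sits inside the next ball of $u_1$'s own hierarchy (the same $K$-fluctuation/Corollary \ref{c.skip} mechanism you invoke) --- contradicting maximality of $W_2$. You instead run the maximality of $W_2$ through Claim 3 applied to the concentric candidate $A_1(u_1)$ and then propagate radius bounds back down via $s'\ge rs_1$. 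Both proofs ultimately rest on the same two ingredients (the factor-$r$ growth along $u_1$'s own ball sequence, and maximality of $W_2$ in the step-$i$ Vitali problem), but the paper's direction of attack avoids your main technical weak point: the inference from the set containment $A_1(u_1)\subseteq\bigl(1+\frac{4}{r-2}\bigr)\cdot W_2$ to the radius inequality $s'\le\bigl(1+\frac{4}{r-2}\bigr)s_2$. For balls with distinct centers in a general group this is not automatic; what one actually gets (using that an infinite finitely generated group has points at every integer distance from $u_1$) is $s'\le d(u_1,u_2)+\bigl(1+\frac{4}{r-2}\bigr)s_2$, and one must recheck that the extra term $d(u_1,u_2)\le s_1+(1-5/r)s_2$ still lets the constants close. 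It does --- one obtains $(r-1)s_1\le 2s_2$ for $r\ge 10$, hence $2s_1\le\frac{4s_2}{r-1}\le\frac{5s_2}{r}$ --- and the analogous care in your $j^*=1$ case (where $|s'-s_2|<d(u_1,u_2)$ likewise needs the same justification) also works out, so the argument is repairable; but as written that step is asserted rather than proved, and it is precisely the step the paper's shorter contradiction argument never needs.
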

\begin{proof}
Let $W_1 = \bl(y_1,s_1)$ and $W_2=\bl(y_2,s_2)$ for some $y_1,y_2 \in B_0$. Since $W_1$
intersects $\intr{r}(W_2)$, we have
\[
d(y_1,y_2) \leq s_2(1-5/r)+s_1.
\]
If $W_1$ is not contained in $W_2$, then $d(y_1,y_2) > s_2-s_1$. From
these inequalities it follows that
\[
s_1 \geq d(y_1,y_2)-s_2(1-5/r) >s_2-s_1-s_2+\frac{5s_2}{r},
\]
hence $s_2<\frac{2rs_1}{5}$. We deduce that the $r$-enlargement of
$W_1$ contains $W_2$. This is a contradiction since $W_2$ is maximal
and the $r$-enlargement of $W_1$ is contained in $n$-th level ball $A_n(y_1)$.
\end{proof}

From the lemma above it follows that the set $B_{i-1}$ can be
decomposed as 
\begin{align*}
B_{i-1} = \left( \bigsqcup\limits_{W \in \calC_{i-1}'} W \right) \sqcup (\bdr{r}(\calC_i) \cap
  B_{i-1}) \sqcup (B_{i-1} \setminus B_{i}), 
\end{align*}
where 
\[
\calC_{i-1}':=\{ W \in \calC_{i-1}: \ W \cap \intr{r}(V) \neq \varnothing
\text{ for some } V \in \calC_i \}.
\]
The rest of the argument depends on how much of $B_{i-1}$ is contained
in $\bdr{r}(\calC_i)$, so let
\[
\Delta:=\frac{\cntm{\bdr{r}(\calC_i) \cap B_{i-1}}}{\cntm{B_{i-1}}}.
\]
There are two possibilities. First, suppose that $\Delta>\frac{\delta}
3$. Then $\cntm{B_{i-1}} \leq
\frac{\cntm{\bdr{r}(\calC_i)}}{\delta/3}$. Let $r$ and the radii of the
balls in $\calC_i$ be large enough
(see Lemma \ref{l.smallbdr}) so
that
\[
\frac{\cntm{\bdr{r}(\calC_i)}}{\cntm{B_i}} <
\frac{\alpha}{\beta} \frac{\delta} 3 (1-\delta)^{-1}.
\]
It is then easy to see that condition \eqref{eq.evenstep} is
satisfied. Suppose, on the other hand,
that $\Delta \leq \frac{\delta} 3$. Then, if $n$ and $r$ are large enough so
that $\cntm{B_{i-1} \setminus B_i}$ is small compared to
$\cntm{B_{i-1}}$, we obtain
\begin{align*}
\cntm{B_{i-1}} &\leq
                 \frac{\alpha}{\beta}\cntm{B_i}+\cntm{\bdr{r}(\calC_i)
                 \cap B_{i-1}}+\cntm{B_{i-1} \setminus B_i} \leq \\
&\leq \frac{\alpha}{\beta}\cntm{B_i}+\frac{\delta}{3}
\cntm{B_{i-1}}+\frac{\delta} 3 \cntm{B_{i-1}},
\end{align*}
which implies that
\[
\cntm{B_i} \geq \frac{\beta}{\alpha}(1-\frac{2 \delta} 3) \cntm{B_{i-1}},
\]
i.e., condition \eqref{eq.evenstep} is satisfied as well. We proceed to the
following `odd' step. 

The proof of the theorem is essentially complete. To finish it we only
need to say how one can choose the constants $N_0, r, n$ and
$\widetilde N_0$. Recall that $\delta \in
(0, 1- \left( \alpha / \beta \right)^{1/2})$ is an arbitrary parameter. 
First, the integer $n \in \N$ is chosen so that
\[
(c+1) \left( 1- \frac 1 c\right) ^n \leq 1-\sqrt{1-\delta / 4}.
\]
Next, we
choose $r$ as the maximum of
\begin{aufziii}
\item the integer $r_0$ given by Lemma \ref{l.smallbdr} with
  the parameter $\frac{\alpha}{\beta}\frac{\delta} 3 (1-\delta)^{-1}$;


\item the integer $r_0$ given by Lemma \ref{l.smallenl} with the
  parameter $1-\sqrt{1-\delta/4}$.
\end{aufziii}
The integer $K>0$ is picked so that condition \eqref{eq.evccondb} is
satisfied. We choose $N_0$ as the maximum of
\begin{aufziii}
\item the integer $n_0$ given by Lemma \ref{l.smallbdr} with
  the parameter $\frac{\alpha}{\beta}\frac{\delta} 3 (1-\delta)^{-1}$;

\item the integer $n_0$ given by Lemma \ref{l.smallenl} with the
  parameter $1-\sqrt{1-\delta/4}$;

\item the integer $n_0$ given by Corollary \ref{c.skip} with the
  parameter $\frac{1-(\alpha/\beta)^{1/d}}{2}$;
\end{aufziii}
Finally, we define $\widetilde N_0$ as $\widetilde N_0:=N_0+4nK+1$. A straightforward computation shows that this choice of constants
satisfies all requirements. We do not assert, however, that this
choice yields \emph{optimal} constants $c_1$ and $c_2$. \qed

\printbibliography[]
\end{document}